\begin{document}
\title{Intrinsic square functions on the weighted Morrey spaces}
\author{Hua Wang \footnote{E-mail address: wanghua@pku.edu.cn.
Supported by National Natural
Science Foundation of China under Grant \#10871173 and \#10931001.}\\
\footnotesize{Department of Mathematics, Zhejiang University, Hangzhou 310027, China}}
\date{}
\maketitle

\begin{abstract}
In this paper, we will study the boundedness properties of intrinsic square functions including the Lusin area integral, Littlewood-Paley $g$-function and $g^*_\lambda$-function on the weighted Morrey spaces $L^{p,\kappa}(w)$ for $1<p<\infty$ and $0<\kappa<1$. The corresponding commutators generated by $BMO(\mathbb R^n)$ functions and intrinsic square functions are also discussed.\\
MSC(2010): 42B25; 42B35\\
Keywords: Intrinsic square functions; weighted Morrey spaces; commutators; $A_p$ weights
\end{abstract}

\section{Introduction and main results}

Let ${\mathbb R}^{n+1}_+=\mathbb R^n\times(0,\infty)$ and $\varphi_t(x)=t^{-n}\varphi(x/t)$. The classical square function (Lusin area integral) is a familiar object. If $u(x,t)=P_t*f(x)$ is the Poisson integral of $f$, where $P_t(x)=c_n\frac{t}{(t^2+|x|^2)^{{(n+1)}/2}}$ denotes the Poisson kernel in ${\mathbb R}^{n+1}_+$. Then we define the classical square function(Lusin area integral) $S(f)$ by
\begin{equation*}
S(f)(x)=\bigg(\iint_{\Gamma(x)}\big|\nabla u(y,t)\big|^2t^{1-n}\,dydt\bigg)^{1/2},
\end{equation*}
where $\Gamma(x)$ denotes the usual cone of aperture one:
\begin{equation*}
\Gamma(x)=\big\{(y,t)\in{\mathbb R}^{n+1}_+:|x-y|<t\big\}
\end{equation*}
and
\begin{equation*}
\big|\nabla u(y,t)\big|=\left|\frac{\partial u}{\partial t}\right|^2+\sum_{j=1}^n\left|\frac{\partial u}{\partial y_j}\right|^2.
\end{equation*}
We can similarly define a cone of aperture $\beta$ for any $\beta>0$:
\begin{equation*}
\Gamma_\beta(x)=\big\{(y,t)\in{\mathbb R}^{n+1}_+:|x-y|<\beta t\big\},
\end{equation*}
and corresponding square function
\begin{equation*}
S_\beta(f)(x)=\bigg(\iint_{\Gamma_\beta(x)}\big|\nabla u(y,t)\big|^2t^{1-n}\,dydt\bigg)^{1/2}.
\end{equation*}
The Littlewood-Paley $g$-function (could be viewed as a ``zero-aperture" version of $S(f)$) and the $g^*_\lambda$-function (could be viewed as an ``infinite aperture" version of $S(f)$) are defined respectively by
\begin{equation*}
g(f)(x)=\bigg(\int_0^\infty\big|\nabla u(x,t)\big|^2 t\,dt\bigg)^{1/2}
\end{equation*}
and
\begin{equation*}
g^*_\lambda(f)(x)=\left(\iint_{{\mathbb R}^{n+1}_+}\bigg(\frac t{t+|x-y|}\bigg)^{\lambda n}\big|\nabla u(y,t)\big|^2 t^{1-n}\,dydt\right)^{1/2}.
\end{equation*}

The modern (real-variable) variant of $S_\beta(f)$ can be defined in the following way. Let $\psi\in C^\infty(\mathbb R^n)$ be real, radial, have support contained in $\{x:|x|\le1\}$, and $\int_{\mathbb R^n}\psi(x)\,dx=0$. The continuous square function $S_{\psi,\beta}(f)$ is defined by
\begin{equation*}
S_{\psi,\beta}(f)(x)=\bigg(\iint_{\Gamma_\beta(x)}\big|f*\psi_t(y)\big|^2\frac{dydt}{t^{n+1}}\bigg)^{1/2}.
\end{equation*}

In 2007, Wilson \cite{wilson1} introduced a new square function called intrinsic square function which is universal in a sense (see also \cite{wilson2}). This function is independent of any particular kernel $\psi$, and it dominates pointwise all the above defined square functions. On the other hand, it is not essentially larger than any particular $S_{\psi,\beta}(f)$. For $0<\alpha\le1$, let ${\mathcal C}_\alpha$ be the family of functions $\varphi$ defined on $\mathbb R^n$ such that $\varphi$ has support containing in $\{x\in\mathbb R^n: |x|\le1\}$, $\int_{\mathbb R^n}\varphi(x)\,dx=0$, and for all $x, x'\in \mathbb R^n$,
\begin{equation*}
|\varphi(x)-\varphi(x')|\le|x-x'|^\alpha.
\end{equation*}
For $(y,t)\in {\mathbb R}^{n+1}_+$ and $f\in L^1_{{loc}}(\mathbb R^n)$, we set
\begin{equation*}
A_\alpha(f)(y,t)=\sup_{\varphi\in{\mathcal C}_\alpha}\big|f*\varphi_t(y)\big|=\sup_{\varphi\in{\mathcal C}_\alpha}\bigg|\int_{\mathbb R^n}\varphi_t(y-z)f(z)\,dz\bigg|.
\end{equation*}
Then we define the intrinsic square function of $f$ (of order $\alpha$) by the formula
\begin{equation*}
\mathcal S_\alpha(f)(x)=\left(\iint_{\Gamma(x)}\Big(A_\alpha(f)(y,t)\Big)^2\frac{dydt}{t^{n+1}}\right)^{1/2}.
\end{equation*}
We can also define varying-aperture versions of $\mathcal S_\alpha(f)$ by the formula
\begin{equation*}
\mathcal S_{\alpha,\beta}(f)(x)=\left(\iint_{\Gamma_\beta(x)}\Big(A_\alpha(f)(y,t)\Big)^2\frac{dydt}{t^{n+1}}\right)^{1/2}.
\end{equation*}
The intrinsic Littlewood-Paley $g$-function and the intrinsic $g^*_\lambda$-function will be defined respectively by
\begin{equation*}
g_\alpha(f)(x)=\left(\int_0^\infty\Big(A_\alpha(f)(x,t)\Big)^2\frac{dt}{t}\right)^{1/2}
\end{equation*}
and
\begin{equation*}
g^*_{\lambda,\alpha}(f)(x)=\left(\iint_{{\mathbb R}^{n+1}_+}\left(\frac t{t+|x-y|}\right)^{\lambda n}\Big(A_\alpha(f)(y,t)\Big)^2\frac{dydt}{t^{n+1}}\right)^{1/2}.
\end{equation*}

In \cite{wilson2}, Wilson proved the following result.

\newtheorem*{thma}{Theorem A}
\begin{thma}
Let $0<\alpha\le1$, $1<p<\infty$ and $w\in A_p(\mbox{Muckenhoupt weight class})$. Then there exists a constant $C>0$ independent of $f$ such that
\begin{equation*}
\|\mathcal S_\alpha(f)\|_{L^p_w}\le C \|f\|_{L^p_w}.
\end{equation*}
\end{thma}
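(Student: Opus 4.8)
The plan is to exhibit $\mathcal{S}_\alpha$ as a Calder\'on--Zygmund singular integral taking values in a Banach space, and then to invoke the vector-valued Calder\'on--Zygmund theory for $A_p$ weights. Writing points of the cone as $(y,t)=(x+u,t)$ with $|u|<t$, one has
\[
\mathcal{S}_\alpha(f)(x)=\Big\|\{\varphi_t*f(x+u)\}_{(\varphi,u,t)}\Big\|_{\mathcal B},\qquad \mathcal B=L^2\Big(\{|u|<t\},\tfrac{du\,dt}{t^{n+1}};L^\infty(\mathcal C_\alpha)\Big),
\]
so that $\mathcal S_\alpha(f)=\|\mathcal K*f\|_{\mathcal B}$ for the $\mathcal B$-valued convolution kernel $\mathcal K(v)(\varphi,u,t)=\varphi_t(u+v)$. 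Once one checks (i) the $L^2$ estimate $\|\mathcal S_\alpha f\|_2\le C\|f\|_2$ and (ii) a H\"ormander regularity condition for $\mathcal K$ in the $\mathcal B$-norm, the operator-valued Calder\'on--Zygmund theorem upgrades this to $\|\mathcal K*f\|_{L^p(w;\mathcal B)}\le C\|f\|_{L^p(w)}$ for every $1<p<\infty$ and $w\in A_p$, which is exactly the asserted inequality. Before starting I would record two uniform facts about the family $\mathcal C_\alpha$: it is uniformly bounded and equicontinuous (hence, by Arzel\`a--Ascoli, compact, which makes the pointwise supremum defining $A_\alpha$ a maximum and settles measurability), and every $\varphi\in\mathcal C_\alpha$ satisfies $\widehat\varphi(0)=0$ together with the uniform bound $\sup_{\varphi\in\mathcal C_\alpha}|\widehat\varphi(\xi)|\le C\min(|\xi|,|\xi|^{-\alpha})$, coming respectively from the cancellation, the compact support, and the Lipschitz-$\alpha$ regularity.

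For step (i), Fubini reduces the $L^2$ norm to a $g$-type quantity: since $|\{x:|x-y|<t\}|=c_nt^n$,
\[
\|\mathcal S_\alpha f\|_2^2=c_n\iint_{\mathbb R^{n+1}_+}A_\alpha(f)(y,t)^2\,\frac{dy\,dt}{t}.
\]
To handle the supremum over $\varphi$ I would fix a radial $\eta\in C_c^\infty$ with $\int\eta=0$, normalized so that the Calder\'on reproducing formula $f=\int_0^\infty f*\eta_s*\eta_s\,\frac{ds}{s}$ holds, and expand $f*\varphi_t=\int_0^\infty (f*\eta_s)*(\eta_s*\varphi_t)\,\frac{ds}{s}$. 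The key estimate is the almost-orthogonality bound $\sup_{\varphi\in\mathcal C_\alpha}\|\eta_s*\varphi_t\|_1\le C\min(s/t,t/s)^{\varepsilon}$, which holds uniformly in $\varphi$ because of the joint cancellation and smoothness of $\eta$ and $\varphi$; crucially the bound does not see the particular $\varphi$. Combining this with Cauchy--Schwarz (Schur's test in the scale variable) dominates $A_\alpha(f)(y,t)^2$ by a weighted average of $|f*\eta_s(z)|^2$ over nearby $(z,s)$, and integrating in $(y,t)$ yields $\|\mathcal S_\alpha f\|_2^2\le C\|g_\eta f\|_2^2\le C\|f\|_2^2$, the last step being Plancherel.

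For step (ii) I would verify the H\"ormander condition $\int_{|v|>2|v'|}\|\mathcal K(v-v')-\mathcal K(v)\|_{\mathcal B}\,dv\le C$ uniformly in $v'$. Expanding the $\mathcal B$-norm, this amounts to controlling $\big(\iint_{|u|<t}\sup_\varphi|\varphi_t(u+v-v')-\varphi_t(u+v)|^2\,\frac{du\,dt}{t^{n+1}}\big)^{1/2}$, integrated in $v$; here the Lipschitz-$\alpha$ estimate gives $|\varphi_t(w-v')-\varphi_t(w)|\le t^{-n}(|v'|/t)^{\alpha}$ on the relevant support, and the $dv$-integral of the square-function norm converges thanks to this $\alpha$-H\"older gain together with the support restriction $|u+v|\lesssim t$. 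All of these estimates are uniform over $\mathcal C_\alpha$, so they survive the inner supremum. With (i) and (ii) in hand, the vector-valued Calder\'on--Zygmund theorem for $A_p$ weights closes the argument.

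I expect the main obstacle to be step (i), specifically the need to bound the $L^2$ norm of a pointwise supremum over the infinite family $\mathcal C_\alpha$: the supremum is not linear, so Plancherel cannot be applied directly to $\mathcal S_\alpha$ itself. The device that makes everything work is that all the quantitative kernel estimates --- the almost-orthogonality bound $\|\eta_s*\varphi_t\|_1\lesssim\min(s/t,t/s)^\varepsilon$ in (i) and the H\"older regularity in (ii) --- are uniform in $\varphi$, which is precisely where the defining normalization of $\mathcal C_\alpha$ (support in the unit ball, mean zero, and a uniform Lipschitz-$\alpha$ modulus) is used. Granting this uniformity, the supremum is harmless and the problem collapses to the boundedness of a single fixed-kernel square function.
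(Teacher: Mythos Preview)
The paper does not supply its own proof of Theorem~A: the result is quoted from Wilson \cite{wilson2} and used as a black-box input (for instance in the proofs of Theorems~1.1 and~1.3). There is therefore no argument in the paper to compare your proposal against.

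On its own merits, your vector-valued Calder\'on--Zygmund route is sound. The linearization $f\mapsto\{\varphi_t*f(\,\cdot+u)\}_{(\varphi,u,t)}$ into $\mathcal B$-valued functions correctly reproduces $\mathcal S_\alpha(f)(x)=\|\mathcal K*f(x)\|_{\mathcal B}$; the size estimate $\|\mathcal K(v)\|_{\mathcal B}\lesssim|v|^{-n}$ and the smoothness estimate $\|\mathcal K(v-v')-\mathcal K(v)\|_{\mathcal B}\lesssim|v'|^{\alpha}|v|^{-n-\alpha}$ follow from the uniform support, bound, and H\"older-$\alpha$ modulus defining $\mathcal C_\alpha$ exactly as you outline; and the weighted conclusion is then delivered by the Rubio de Francia--Ruiz--Torrea extension of the Benedek--Calder\'on--Panzone theorem. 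The one place your sketch is thin is the unweighted $L^2$ step: the $L^1$ almost-orthogonality bound $\|\eta_s*\varphi_t\|_1\lesssim\min(s/t,t/s)^{\varepsilon}$ alone does not control $A_\alpha(f)(y,t)$ pointwise. You also need the uniform $L^\infty$ and support bounds on $\eta_s*\varphi_t$, which turn $(f*\eta_s)*(\eta_s*\varphi_t)(y)$ into a constant times $\min(s/t,t/s)^{\varepsilon}$ multiplied by an average of $|f*\eta_s|$ over a ball of radius $\sim\max(s,t)$; after that, Cauchy--Schwarz in $ds/s$, the $L^2$-contractivity of averaging, and Plancherel for $g_\eta$ close the estimate. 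With that detail filled in the argument is complete. Wilson's original proof is different in flavor --- it rests on a direct pointwise comparison of $\mathcal S_\alpha$ with a fixed-kernel square function rather than on operator-valued kernel estimates --- but your approach has the advantage of slotting into an off-the-shelf weighted theory.
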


Moreover, in \cite{lerner}, Lerner showed sharp $L^p_w$ norm inequalities for the intrinsic square functions in terms of the $A_p$ characteristic constant of $w$ for all $1<p<\infty$. As for the boundedness of intrinsic square functions on the weighted Hardy spaces $H^p_w(\mathbb R^n)$ for $n/{(n+\alpha)}\le p\le1$, we refer the readers to \cite{huang}, \cite{wang3} and \cite{wang4}.

Let $b$ be a locally integrable function on $\mathbb R^n$, in this paper, we will also consider the commutators generated by $b$ and intrinsic square functions, which are defined respectively by the following expressions
\begin{equation*}
\big[b,\mathcal S_\alpha\big](f)(x)=\left(\iint_{\Gamma(x)}\sup_{\varphi\in{\mathcal C}_\alpha}\bigg|\int_{\mathbb R^n}\big[b(x)-b(z)\big]\varphi_t(y-z)f(z)\,dz\bigg|^2\frac{dydt}{t^{n+1}}\right)^{1/2},
\end{equation*}
\begin{equation*}
\big[b,g_\alpha\big](f)(x)=\left(\int_0^\infty\sup_{\varphi\in{\mathcal C}_\alpha}\bigg|\int_{\mathbb R^n}\big[b(x)-b(y)\big]\varphi_t(x-y)f(y)\,dy\bigg|^2\frac{dt}{t}\right)^{1/2},
\end{equation*}
and
\begin{equation*}
\big[b,g^*_{\lambda,\alpha}\big](f)(x)=\left(\iint_{{\mathbb R}^{n+1}_+}\left(\frac t{t+|x-y|}\right)^{\lambda n}\sup_{\varphi\in{\mathcal C}_\alpha}\bigg|\int_{\mathbb R^n}\big[b(x)-b(z)\big]\varphi_t(y-z)f(z)\,dz\bigg|^2\frac{dydt}{t^{n+1}}\right)^{1/2}.
\end{equation*}

The classical Morrey spaces $\mathcal L^{p,\lambda}$ were first introduced by Morrey in \cite{morrey} to study the
local behavior of solutions to second order elliptic partial differential equations. For the boundedness of the
Hardy-Littlewood maximal operator, the fractional integral operator and the Calder\'on-Zygmund singular integral
operator on these spaces, we refer the readers to \cite{adams,chiarenza,peetre}. For the properties and applications
of classical Morrey spaces, see \cite{fan,fazio1,fazio2} and the references therein.

In 2009, Komori and Shirai \cite{komori} first defined the weighted Morrey spaces $L^{p,\kappa}(w)$ which could be
viewed as an extension of weighted Lebesgue spaces, and studied the boundedness of the above classical operators on
these weighted spaces. Recently, in \cite{wang1}, \cite{wang5} and \cite{wang2}, we have established the continuity properties of some other operators on the weighted Morrey spaces $L^{p,\kappa}(w)$.

The purpose of this paper is to discuss the boundedness properties of intrinsic square functions and their commutators on the weighted Morrey spaces $L^{p,\kappa}(w)$ for all $1<p<\infty$ and $0<\kappa<1$. Our main results in the paper are formulated as follows.
\newtheorem{theorem}{Theorem}[section]

\begin{theorem}
Let $0<\alpha\le1$, $1<p<\infty$, $0<\kappa<1$ and $w\in A_p$. Then there is a
constant $C>0$ independent of $f$ such that
\begin{equation*}
\big\|\mathcal S_\alpha(f)\big\|_{L^{p,\kappa}(w)}\le C\|f\|_{L^{p,\kappa}(w)}.
\end{equation*}
\end{theorem}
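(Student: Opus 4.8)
The plan is to verify the defining weighted Morrey estimate ball by ball. Fix an arbitrary ball $B=B(x_0,r_B)$ and split $f=f_1+f_2$ with $f_1=f\chi_{2B}$ and $f_2=f\chi_{(2B)^c}$. Since $\mathcal S_\alpha$ is sublinear, it suffices to bound $w(B)^{-\kappa/p}\big(\int_B \mathcal S_\alpha(f_i)^p w\big)^{1/p}$ by $C\|f\|_{L^{p,\kappa}(w)}$ for $i=1,2$. For the local piece I would invoke Theorem A directly: $\|\mathcal S_\alpha(f_1)\|_{L^p_w}\le C\|f_1\|_{L^p_w}=C\big(\int_{2B}|f|^p w\big)^{1/p}\le C\|f\|_{L^{p,\kappa}(w)}\,w(2B)^{\kappa/p}$, and then the doubling property $w(2B)\le Cw(B)$ of the $A_p$ weight absorbs the ratio $w(2B)^{\kappa/p}/w(B)^{\kappa/p}$, giving the required bound.

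The non-local piece $\mathcal S_\alpha(f_2)$ carries the real work, and I would attack it through a pointwise estimate for $x\in B$. Each $\varphi\in\mathcal C_\alpha$ is supported in the unit ball and satisfies $\|\varphi\|_\infty\le1$ (a consequence of the H\"older condition together with the vanishing of $\varphi$ off $\{|x|\le1\}$), so $A_\alpha(f_2)(y,t)\le t^{-n}\int_{|y-z|\le t}|f(z)|\,dz$. A short geometric argument shows that for $x\in B$ and $(y,t)\in\Gamma(x)$ the requirement that $\{z:|y-z|\le t\}$ meet $(2B)^c$ forces $t>r_B/2$, and in that range $B(y,t)\subset B(x_0,4t)$. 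Integrating the square of this bound over the cone, where the $y$-slice at height $t$ contributes a factor comparable to $t^n$, collapses $\mathcal S_\alpha(f_2)(x)^2$ to $C\int_{r_B/2}^{\infty}\big(t^{-n}\int_{B(x_0,Ct)}|f|\big)^2\frac{dt}{t}$, and a dyadic decomposition $t\sim 2^j r_B$ turns this into $C\sum_{j\ge1}\big(|2^jB|^{-1}\int_{2^jB}|f|\big)^2$.

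It then remains to control the averages. By H\"older's inequality and the $A_p$ condition one obtains $|2^jB|^{-1}\int_{2^jB}|f|\le C\|f\|_{L^{p,\kappa}(w)}\,w(2^jB)^{(\kappa-1)/p}$. The decisive observation is that, although no decay is available from the kernel itself, the factor $w(2^jB)^{(\kappa-1)/p}$ decays in $j$: since $w\in A_p$ obeys the reverse-doubling estimate $w(B)/w(2^jB)\le C2^{-j\delta}$ for some $\delta>0$ and since $\kappa<1$, we get $w(2^jB)^{(\kappa-1)/p}\le C2^{-j\delta(1-\kappa)/p}\,w(B)^{(\kappa-1)/p}$. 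Summing the resulting geometric series yields the uniform bound $\mathcal S_\alpha(f_2)(x)\le C\|f\|_{L^{p,\kappa}(w)}\,w(B)^{(\kappa-1)/p}$ for all $x\in B$; inserting this into the Morrey average over $B$ cancels the powers of $w(B)$ exactly and finishes the estimate for $\mathcal S_\alpha(f_2)$.

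The step I expect to be the main obstacle is the non-local estimate, and specifically recognizing that the summability of $\sum_j\big(|2^jB|^{-1}\int_{2^jB}|f|\big)^2$ must be extracted from the reverse-doubling of the weight rather than from cancellation of $\varphi$; once the correct geometric restriction $t>r_B/2$ and the decay of $w(2^jB)^{(\kappa-1)/p}$ are in hand, the remaining computations are routine. Combining the local and non-local bounds and taking the supremum over all balls $B$ then proves the theorem.
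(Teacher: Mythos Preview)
Your proof is correct and follows essentially the same strategy as the paper: the same local/non-local split, Theorem~A for $f_1$, and for $f_2$ a pointwise cone estimate based on the support of $\varphi$, followed by H\"older, the $A_p$ condition, and the reverse-doubling of $w$ (which the paper phrases via the reverse H\"older class $RH_r$, Lemma~2.2) to sum the geometric series. The only cosmetic difference is bookkeeping in the non-local part: the paper decomposes $(2B)^c$ into dyadic annuli in the $z$-variable and uses a $k$-dependent lower bound $t\gtrsim 2^k r_B$ before applying Minkowski, whereas you take the single lower bound $t>r_B/2$ and decompose dyadically in $t$; both routes arrive at the same summable averages $|2^{j}B|^{-1}\int_{2^{j}B}|f|$.
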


\begin{theorem}
Let $0<\alpha\le1$, $1<p<\infty$, $0<\kappa<1$ and $w\in A_p$. Suppose that $b\in BMO(\mathbb R^n)$, then there is a
constant $C>0$ independent of $f$ such that
\begin{equation*}
\big\|\big[b,\mathcal S_\alpha\big](f)\big\|_{L^{p,\kappa}(w)}\le C\|f\|_{L^{p,\kappa}(w)}.
\end{equation*}
\end{theorem}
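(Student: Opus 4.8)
The plan is to reduce the Morrey estimate to a single‑ball estimate and split $f$ into local and global parts. Fix a ball $B=B(x_0,r_B)$ and write $f=f_1+f_2$ with $f_1=f\chi_{2B}$ and $f_2=f\chi_{(2B)^c}$. Since the commutator is sublinear, $[b,\mathcal S_\alpha](f)\le[b,\mathcal S_\alpha](f_1)+[b,\mathcal S_\alpha](f_2)$, so it suffices to control
\[
\frac{1}{w(B)^{\kappa/p}}\Big(\int_B[b,\mathcal S_\alpha](f_i)(x)^pw(x)\,dx\Big)^{1/p},\qquad i=1,2.
\]
For the local part $i=1$ I would invoke the weighted $L^p$ boundedness of $[b,\mathcal S_\alpha]$ on $L^p(w)$ for $w\in A_p$ (a known companion to Theorem A, obtainable from a pointwise sharp maximal function estimate): this yields $\|[b,\mathcal S_\alpha](f_1)\|_{L^p(w)}\le C\|b\|_{BMO}\|f\chi_{2B}\|_{L^p(w)}\le C\|b\|_{BMO}\|f\|_{L^{p,\kappa}(w)}w(2B)^{\kappa/p}$, and the doubling property $w(2B)\le Cw(B)$ of $A_p$ weights closes this case.

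The heart of the matter is the global part $i=2$. Writing $b_{2B}=\frac1{|2B|}\int_{2B}b$ and using $b(x)-b(z)=[b(x)-b_{2B}]-[b(z)-b_{2B}]$ together with the triangle inequality in $L^2(\frac{dy\,dt}{t^{n+1}})$, one obtains the pointwise splitting
\[
[b,\mathcal S_\alpha](f_2)(x)\le|b(x)-b_{2B}|\,\mathcal S_\alpha(f_2)(x)+\mathcal S_\alpha\big((b-b_{2B})f_2\big)(x).
\]
Next I would establish the basic off‑diagonal estimate: for $x\in B$ and any $g$ supported in $(2B)^c$, using that every $\varphi\in\mathcal C_\alpha$ is supported in the unit ball and satisfies $\|\varphi\|_\infty\le C$ (a consequence of the mean‑zero and Lipschitz conditions), one has $A_\alpha(g)(y,t)\le Ct^{-n}\int_{|y-z|\le t}|g(z)|\,dz$. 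Decomposing $(2B)^c=\bigcup_{j\ge1}(2^{j+1}B\setminus2^jB)$ and noting that for $(y,t)\in\Gamma(x)$ the annulus $2^{j+1}B\setminus2^jB$ can be reached only when $t\gtrsim2^jr_B$, the cone integral over each annulus is controlled (after integrating in $y$) by $\int_{t\gtrsim2^jr_B}t^{-2n-1}\,dt$, which yields
\[
\mathcal S_\alpha(g)(x)\le C\sum_{j=1}^\infty\frac1{|2^{j+1}B|}\int_{2^{j+1}B}|g(z)|\,dz.
\]

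It then remains to insert $g=f_2$ and $g=(b-b_{2B})f_2$ and to carry out the weighted estimates. The generalized H\"older inequality and the $A_p$ condition give the averaging bound $\frac1{|2^{j+1}B|}\int_{2^{j+1}B}|f|\le C\,w(2^{j+1}B)^{-1/p}(\int_{2^{j+1}B}|f|^pw)^{1/p}\le C\|f\|_{L^{p,\kappa}(w)}w(2^{j+1}B)^{(\kappa-1)/p}$. For the second term I would further split $b-b_{2B}=[b-b_{2^{j+1}B}]+[b_{2^{j+1}B}-b_{2B}]$, using $|b_{2^{j+1}B}-b_{2B}|\le Cj\|b\|_{BMO}$ and the weighted John--Nirenberg inequality $(\frac1{w(Q)}\int_Q|b-b_Q|^{p}w)^{1/p}\le C\|b\|_{BMO}$ valid for $w\in A_\infty$. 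The factor $|b(x)-b_{2B}|$ in the first term is absorbed by integrating over $B$ via the same estimate, producing $(\int_B|b-b_{2B}|^pw)^{1/p}\le C\|b\|_{BMO}w(B)^{1/p}$. After dividing by $w(B)^{\kappa/p}$, both contributions reduce to a series of the form $\sum_{j\ge1}P(j)\big(w(B)/w(2^{j+1}B)\big)^{(1-\kappa)/p}$ with $P$ a polynomial in $j$.

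The main obstacle, and the step where the hypotheses are genuinely used, is the convergence of this last series. Here I would invoke the $A_\infty$ (reverse‑doubling) property of $w\in A_p$: there exist $C,\delta>0$ with $w(B)/w(2^{j+1}B)\le C\,2^{-(j+1)n\delta}$. Since $0<\kappa<1$, the exponent $(1-\kappa)/p$ is strictly positive, so each term decays geometrically and dominates any polynomial factor $P(j)$ coming from the $BMO$ telescoping; hence the series sums to a constant independent of $B$ and $f$. Taking the supremum over all balls $B$ yields the desired Morrey bound. The delicate points to watch are the uniformity in $x\in B$ of the pointwise bound on $\mathcal S_\alpha(f_2)$, the bookkeeping of the $j$‑dependent $BMO$ factors, and verifying that $w\in A_p$ supplies both the averaging inequality and the reverse‑doubling decay that make the restriction $\kappa<1$ essential.
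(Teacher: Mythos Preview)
Your proposal is correct and follows essentially the same strategy as the paper: the same local/global split $f=f\chi_{2B}+f\chi_{(2B)^c}$, invocation of the weighted $L^p$ bound for $[b,\mathcal S_\alpha]$ on the local piece, the same off-diagonal pointwise estimate $\mathcal S_\alpha(g)(x)\le C\sum_{j\ge1}|2^{j+1}B|^{-1}\int_{2^{j+1}B}|g|$ for $g$ supported off $2B$, the same $BMO$ telescoping $b-b_{2B}=(b-b_{2^{j+1}B})+(b_{2^{j+1}B}-b_{2B})$, and series convergence via the $A_\infty$ decay of $w(B)/w(2^{j+1}B)$. The only cosmetic differences are your centering at $b_{2B}$ rather than $b_B$, and your direct appeal to the weighted John--Nirenberg inequality and $A_\infty$ reverse-doubling where the paper instead derives both via H\"older's inequality combined with the reverse H\"older property $w\in RH_r$ (Lemma~2.2 and inequality~(5)).
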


\begin{theorem}
Let $0<\alpha\le1$, $1<p<\infty$, $0<\kappa<1$ and $w\in A_p$. If $\lambda>\max\{p,3\}$, then there is a
constant $C>0$ independent of $f$ such that
\begin{equation*}
\big\|g^*_{\lambda,\alpha}(f)\big\|_{L^{p,\kappa}(w)}\le C\|f\|_{L^{p,\kappa}(w)}.
\end{equation*}
\end{theorem}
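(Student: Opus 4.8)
The plan is to deduce Theorem 3 from the boundedness of the intrinsic area integral $\mathcal S_\alpha$ (Theorem 1), by comparing $g^*_{\lambda,\alpha}$ pointwise to a weighted sum of varying-aperture area integrals $\mathcal S_{\alpha,2^j}$ and then invoking a change-of-aperture estimate on $L^{p,\kappa}(w)$.

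First I would split the domain of integration $\mathbb R^{n+1}_+$ appearing in $g^*_{\lambda,\alpha}(f)(x)^2$ into the cone $\Gamma(x)=\Gamma_1(x)$ together with the dyadic shells $\Gamma_{2^j}(x)\setminus\Gamma_{2^{j-1}}(x)$, $j\ge1$. On $\Gamma(x)$ one has $t/(t+|x-y|)\le1$, so this piece is at most $\mathcal S_\alpha(f)(x)^2$. On the $j$-th shell one has $|x-y|\ge 2^{j-1}t$, hence $\big(t/(t+|x-y|)\big)^{\lambda n}\le 2^{-(j-1)\lambda n}$, and enlarging the region of integration to the full cone $\Gamma_{2^j}(x)$ bounds this piece by $2^{-(j-1)\lambda n}\mathcal S_{\alpha,2^j}(f)(x)^2$. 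Summing yields the pointwise inequality
\begin{equation*}
g^*_{\lambda,\alpha}(f)(x)^2\le\mathcal S_\alpha(f)(x)^2+\sum_{j=1}^\infty 2^{-(j-1)\lambda n}\,\mathcal S_{\alpha,2^j}(f)(x)^2 .
\end{equation*}
This step is the routine geometric decomposition; no weight properties enter yet.

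The heart of the argument is a change-of-aperture estimate on the weighted Morrey space: for $w\in A_p$ and $\beta\ge1$,
\begin{equation*}
\big\|\mathcal S_{\alpha,\beta}(f)\big\|_{L^{p,\kappa}(w)}\le C\,\beta^{\,n\gamma}\big\|\mathcal S_\alpha(f)\big\|_{L^{p,\kappa}(w)},\qquad \gamma=\tfrac12\max\{p,3\}.
\end{equation*}
I would establish this by controlling $\mathcal S_{\alpha,\beta}(f)$ in terms of $\mathcal S_\alpha(f)$ through the Hardy--Littlewood maximal operator (whose $L^{p,\kappa}(w)$-boundedness for $w\in A_p$ is at our disposal) together with the doubling property of $A_p$ weights, namely $w\big(B(y,\beta t)\big)\le C\beta^{np}\,w\big(B(y,t)\big)$. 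In the resulting exponent, the robust maximal-function/$L^2$-type comparison contributes a factor of size at most $3n/2$, while the $A_p$ doubling contributes $np/2$; keeping the larger of the two produces $\gamma n=\tfrac n2\max\{p,3\}$. Obtaining this estimate with an exponent small enough to be absorbed later, under no assumption on $\lambda$ beyond the stated one, is the main obstacle of the proof.

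Finally I would take $L^{p,\kappa}(w)$-norms in the pointwise inequality. Using the identity $\|F\|_{L^{p,\kappa}(w)}^2=\big\||F|^2\big\|_{L^{p/2,\kappa}(w)}$ and the triangle inequality for the norm of $L^{p/2,\kappa}(w)$ when $p\ge2$ (respectively the elementary subadditivity $(\sum a_j)^{p/2}\le\sum a_j^{p/2}$ when $1<p<2$), together with the change-of-aperture bound, everything reduces to the geometric series $\sum_{j\ge1}2^{-(j-1)\lambda n}\,2^{\,jn\max\{p,3\}}$. This series converges exactly when $\lambda n>n\max\{p,3\}$, i.e. when $\lambda>\max\{p,3\}$, which is precisely the hypothesis of Theorem 3. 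The outcome is $\|g^*_{\lambda,\alpha}(f)\|_{L^{p,\kappa}(w)}\le C\|\mathcal S_\alpha(f)\|_{L^{p,\kappa}(w)}$, and an application of Theorem 1 then gives the desired bound $C\|f\|_{L^{p,\kappa}(w)}$.
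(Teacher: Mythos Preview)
Your pointwise decomposition of $g^*_{\lambda,\alpha}(f)(x)^2$ into $\mathcal S_\alpha(f)(x)^2$ plus a weighted sum of $\mathcal S_{\alpha,2^j}(f)(x)^2$ is exactly the starting point used in the paper. The divergence is at the next step, and there your proposal has a real gap.

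You assert a change-of-aperture inequality \emph{on the Morrey space},
\[
\big\|\mathcal S_{\alpha,\beta}(f)\big\|_{L^{p,\kappa}(w)}\le C\,\beta^{\,n\gamma}\big\|\mathcal S_\alpha(f)\big\|_{L^{p,\kappa}(w)},\qquad \gamma=\tfrac12\max\{p,3\},
\]
but you do not prove it, and the sketch you offer (``controlling $\mathcal S_{\alpha,\beta}(f)$ through the maximal operator together with $A_p$ doubling'') does not produce this exponent in any recognizable way. The known change-of-aperture arguments (Fubini for $p=2$, duality and $M_w$ for $p>2$, good-$\lambda$ for $1<p<2$) are global $L^p_w$ statements; none of them localizes to a ball $B$ so as to yield a Morrey-norm comparison of $\mathcal S_{\alpha,2^j}(f)$ with $\mathcal S_\alpha(f)$. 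Your heuristic that a ``maximal-function/$L^2$-type comparison contributes $3n/2$'' does not correspond to any such argument; in fact the $3$ in the hypothesis $\lambda>\max\{p,3\}$ has nothing to do with maximal functions.

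The paper avoids this difficulty by \emph{not} attempting a Morrey change-of-aperture. Instead, for each fixed ball $B$ it splits $f=f_1+f_2$ with $f_1=f\chi_{2B}$. For the local piece $f_1$ it applies the $L^p_w$ change-of-aperture lemmas (giving $2^{jn}$ when $1<p\le2$ and $2^{jnp/2}$ when $p\ge2$) followed by Theorem~A, and then compares $w(2B)$ with $w(B)$. For the far piece $f_2$ it carries out a direct pointwise computation on $\mathcal S_{\alpha,2^j}(f_2)(x)$ for $x\in B$, using that $(y,t)\in\Gamma_{2^j}(x)$ and $z\in 2^{k+1}B\setminus 2^kB$ force $t\gtrsim 2^{k-j}r_B$; this is where the factor $2^{3jn/2}$, and hence the ``$3$'', actually arises. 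Summing the two contributions gives $2^{jn}+2^{jnp/2}+2^{3jn/2}$, and the geometric series converges precisely when $\lambda>\max\{p,3\}$. Your outline is missing this near/far decomposition and the accompanying pointwise estimate for $f_2$; without them the key inequality you rely on remains unproved.
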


\begin{theorem}
Let $0<\alpha\le1$, $1<p<\infty$, $0<\kappa<1$ and $w\in A_p$. If $b\in BMO(\mathbb R^n)$ and $\lambda>\max\{p,3\}$, then there is a
constant $C>0$ independent of $f$ such that
\begin{equation*}
\big\|\big[b,g^*_{\lambda,\alpha}\big](f)\big\|_{L^{p,\kappa}(w)}\le C\|f\|_{L^{p,\kappa}(w)}.
\end{equation*}
\end{theorem}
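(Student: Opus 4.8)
The plan is to reduce Theorem~4 to Theorem~2 by the same annular decomposition of the upper half-space that converts an ``infinite aperture'' object into a superposition of fixed-aperture ones. Writing the defining integral of $[b,g^*_{\lambda,\alpha}](f)(x)$ over the cone $\Gamma(x)=\{|x-y|<t\}$ together with the dyadic shells $\{2^{k-1}t\le|x-y|<2^kt\}$, $k\ge1$, and observing that on the $k$-th shell one has $t/(t+|x-y|)\approx 2^{-k}$ while that shell is contained in the aperture-$2^k$ cone $\Gamma_{2^k}(x)$, I obtain the pointwise domination
\begin{equation*}
[b,g^*_{\lambda,\alpha}](f)(x)^2\le C\,[b,\mathcal S_\alpha](f)(x)^2+C\sum_{k=1}^\infty 2^{-k\lambda n}\,[b,\mathcal S_{\alpha,2^k}](f)(x)^2,
\end{equation*}
where $[b,\mathcal S_{\alpha,\beta}]$ denotes the aperture-$\beta$ commutator obtained by replacing $\Gamma(x)$ with $\Gamma_\beta(x)$ in the definition of $[b,\mathcal S_\alpha]$. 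This is the commutator counterpart of the identity underlying Theorem~3, and the first term on the right is already controlled by Theorem~2.

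Next I would take $L^{p,\kappa}(w)$ norms and sum. For $p\ge2$, Minkowski's inequality in $L^{p/2,\kappa}(w)$ (a normed space since $p/2\ge1$), combined with the identity $\|g^2\|_{L^{p/2,\kappa}(w)}=\|g\|_{L^{p,\kappa}(w)}^2$, gives
\begin{equation*}
\Big\|\Big(\sum_{k\ge1}2^{-k\lambda n}[b,\mathcal S_{\alpha,2^k}](f)^2\Big)^{1/2}\Big\|_{L^{p,\kappa}(w)}\le\Big(\sum_{k\ge1}2^{-k\lambda n}\big\|[b,\mathcal S_{\alpha,2^k}](f)\big\|_{L^{p,\kappa}(w)}^2\Big)^{1/2},
\end{equation*}
while for $p<2$ the elementary bound $(\sum_k a_kg_k^2)^{1/2}\le\sum_k a_k^{1/2}g_k$ reduces matters to $\sum_{k\ge1}2^{-k\lambda n/2}\|[b,\mathcal S_{\alpha,2^k}](f)\|_{L^{p,\kappa}(w)}$. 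In either case the proof closes once one has an aperture-dependent Morrey estimate of the form
\begin{equation*}
\big\|[b,\mathcal S_{\alpha,2^k}](f)\big\|_{L^{p,\kappa}(w)}\le C\,2^{k n\gamma}\|f\|_{L^{p,\kappa}(w)},\qquad 2\gamma\le\max\{p,3\},
\end{equation*}
because then the two series are dominated by $\sum_k 2^{-kn(\lambda-2\gamma)}$ and $\sum_k 2^{-kn(\lambda/2-\gamma)}$ respectively, both of which converge under the hypothesis $\lambda>\max\{p,3\}$.

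Establishing this aperture-scaled commutator estimate is where the real work lies, and I expect it to be the main obstacle. The natural route is to rerun the proof of Theorem~2 while tracking the dependence on the aperture $\beta=2^k$ at every step: enlarging the cone from aperture $1$ to $\beta$ multiplies the underlying cone volume by $\beta^n$, which at the $L^2$ level contributes a factor $\beta^{n/2}$, and the weighted $A_p$ bookkeeping — through the behaviour of the Hardy--Littlewood maximal function and of $w$ under the dilation of the cone — upgrades this to $\beta^{n\gamma}$ with $2\gamma\le\max\{p,3\}$. The commutator itself contributes only the extra factor $b(x)-b(z)$; splitting $b(x)-b(z)=(b(x)-b_B)-(b(z)-b_B)$ over the relevant balls and invoking the weighted John--Nirenberg inequality (valid since $w\in A_p\subset A_\infty$) produces BMO averages over balls of radius $\sim\beta t$, whose deviation from the radius-$t$ averages grows only like $\log\beta\sim k$. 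This polynomial-in-$k$ loss is harmless against the geometric factor $2^{-k\lambda n}$, which is precisely why the commutator version requires no stronger hypothesis on $\lambda$ than the plain $g^*_{\lambda,\alpha}$ in Theorem~3. Verifying that the weighted aperture growth does not exceed $\max\{p,3\}/2$, so that $\lambda>\max\{p,3\}$ indeed suffices, is the delicate quantitative point.
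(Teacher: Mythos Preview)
Your plan is correct and is essentially the approach the paper intends: the paper's proof of Theorem~1.4 is literally the one-line remark ``by using the arguments as in the proof of Theorems~1.2 and~1.3, we can also show the conclusion of Theorem~1.4,'' which is precisely your strategy of combining the annular decomposition from Theorem~1.3 with the commutator splitting $b(x)-b(z)=(b(x)-b_B)-(b(z)-b_B)$ and the BMO estimates from Theorem~1.2. One simplification: your case distinction $p\ge2$ versus $p<2$ for the summation is unnecessary, since the pointwise inequality $\bigl(\sum_k a_k^2\bigr)^{1/2}\le\sum_k a_k$ followed by the triangle inequality in $L^p(B,w)$ (exactly as in the display right after the pointwise bound for $g^*_{\lambda,\alpha}(f)(x)^2$ in the proof of Theorem~1.3) handles all $1<p<\infty$ uniformly and leads directly to the series $\sum_{j\ge1}2^{-j\lambda n/2}\bigl\|[b,\mathcal S_{\alpha,2^j}](f)\bigr\|_{L^{p,\kappa}(w)}$.
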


In \cite{wilson1}, Wilson also showed that for any $0<\alpha\le1$, the functions $\mathcal S_\alpha(f)(x)$ and $g_\alpha(f)(x)$ are pointwise comparable. Thus, as a direct consequence of Theorems 1.1 and 1.2, we obtain the following

\newtheorem{corollary}[theorem]{Corollary}

\begin{corollary}
Let $0<\alpha\le1$, $1<p<\infty$, $0<\kappa<1$ and $w\in A_p$. Then there is a
constant $C>0$ independent of $f$ such that
\begin{equation*}
\big\|g_\alpha(f)\big\|_{L^{p,\kappa}(w)}\le C\|f\|_{L^{p,\kappa}(w)}.
\end{equation*}
\end{corollary}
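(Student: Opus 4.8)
The plan is to deduce the Corollary directly from Theorem 1.1 together with Wilson's pointwise comparability between the intrinsic Lusin area integral $\mathcal S_\alpha(f)$ and the intrinsic Littlewood--Paley function $g_\alpha(f)$. Recall from \cite{wilson1} that for each fixed $0<\alpha\le1$ there exist positive constants $c_1,c_2$, depending only on $n$ and $\alpha$ but not on $f$, such that
\[
c_1\,\mathcal S_\alpha(f)(x)\le g_\alpha(f)(x)\le c_2\,\mathcal S_\alpha(f)(x)\qquad\text{for all }x\in\mathbb R^n.
\]
For the present purpose only the upper estimate $g_\alpha(f)(x)\le c_2\,\mathcal S_\alpha(f)(x)$ is needed.

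The key observation is that the weighted Morrey norm is monotone and positively homogeneous with respect to pointwise nonnegative comparisons. Writing the norm in the Komori--Shirai form
\[
\|h\|_{L^{p,\kappa}(w)}=\sup_{B}\left(\frac{1}{w(B)^{\kappa}}\int_{B}|h(x)|^{p}w(x)\,dx\right)^{1/p},
\]
where the supremum runs over all balls $B\subset\mathbb R^n$, one sees that if $0\le G(x)\le H(x)$ pointwise then $\|G\|_{L^{p,\kappa}(w)}\le\|H\|_{L^{p,\kappa}(w)}$, and that $\|cH\|_{L^{p,\kappa}(w)}=c\,\|H\|_{L^{p,\kappa}(w)}$ for every $c\ge0$. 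Both facts are immediate from the monotonicity of the weighted integral over each fixed $B$ and from the $p$-homogeneity of the integrand inside the supremum.

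Applying these two elementary facts with $G=g_\alpha(f)$ and $H=c_2\,\mathcal S_\alpha(f)$ gives
\[
\|g_\alpha(f)\|_{L^{p,\kappa}(w)}\le c_2\,\|\mathcal S_\alpha(f)\|_{L^{p,\kappa}(w)}.
\]
Since the hypotheses $0<\alpha\le1$, $1<p<\infty$, $0<\kappa<1$ and $w\in A_p$ assumed here are exactly those of Theorem 1.1, that theorem bounds the right-hand side by $C\|f\|_{L^{p,\kappa}(w)}$. Combining the two estimates yields the desired inequality with final constant $c_2C$.

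I do not anticipate any genuine obstacle in this argument: the analytic content is carried entirely by Theorem 1.1 and by Wilson's comparability, and the only thing to verify is the monotonicity of the Morrey norm, which needs nothing beyond its definition. The one point worth stating carefully is that the comparability constants $c_1,c_2$ are independent of $f$, so that the resulting constant is likewise independent of $f$, as required.
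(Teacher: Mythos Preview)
Your proposal is correct and follows exactly the paper's own approach: the paper states that Wilson's pointwise comparability of $g_\alpha(f)$ and $\mathcal S_\alpha(f)$ makes the Corollary a direct consequence of Theorem 1.1. Your added remarks on monotonicity and homogeneity of the weighted Morrey norm simply make explicit what the paper leaves implicit.
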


\begin{corollary}
Let $0<\alpha\le1$, $1<p<\infty$, $0<\kappa<1$ and $w\in A_p$. Suppose that $b\in BMO(\mathbb R^n)$, then there is a
constant $C>0$ independent of $f$ such that
\begin{equation*}
\big\|\big[b,g_\alpha\big](f)\big\|_{L^{p,\kappa}(w)}\le C\|f\|_{L^{p,\kappa}(w)}.
\end{equation*}
\end{corollary}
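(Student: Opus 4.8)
The plan is to deduce the bound for $[b,g_\alpha]$ from the commutator bound for $[b,\mathcal S_\alpha]$ already furnished by Theorem 1.2, in exact analogy with the way Corollary 1.1 follows from Theorem 1.1. The first step is to recognize that both commutators are instances of the corresponding intrinsic square functions applied to one and the same auxiliary function. Fix $x\in\mathbb R^n$ and set $F_x(z)=[b(x)-b(z)]f(z)$. Then for every $(y,t)\in{\mathbb R}^{n+1}_+$,
\[
\sup_{\varphi\in{\mathcal C}_\alpha}\bigg|\int_{\mathbb R^n}[b(x)-b(z)]\varphi_t(y-z)f(z)\,dz\bigg|=\sup_{\varphi\in{\mathcal C}_\alpha}\big|F_x*\varphi_t(y)\big|=A_\alpha(F_x)(y,t),
\]
so that, reading off the defining formulas, $[b,g_\alpha](f)(x)=g_\alpha(F_x)(x)$ and $[b,\mathcal S_\alpha](f)(x)=\mathcal S_\alpha(F_x)(x)$, where in each case the square function is evaluated at the very point $x$ that enters the definition of $F_x$.

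Second, I would invoke Wilson's pointwise equivalence $g_\alpha(h)(x)\le C\,\mathcal S_\alpha(h)(x)$, valid for all $h\in L^1_{loc}$ and all $x$, with $C=C(n,\alpha)$ independent of $h$ and of $x$. The crucial point is that this constant does not depend on the function to which the square function is applied; hence one may legitimately substitute $h=F_x$ (which itself depends on $x$) and evaluate at the point $x$, obtaining
\[
[b,g_\alpha](f)(x)=g_\alpha(F_x)(x)\le C\,\mathcal S_\alpha(F_x)(x)=C\,[b,\mathcal S_\alpha](f)(x)
\]
for every $x$. Taking $L^{p,\kappa}(w)$ norms of both sides, using monotonicity of the norm, and then applying Theorem 1.2 yields
\[
\big\|[b,g_\alpha](f)\big\|_{L^{p,\kappa}(w)}\le C\,\big\|[b,\mathcal S_\alpha](f)\big\|_{L^{p,\kappa}(w)}\le C\,\|f\|_{L^{p,\kappa}(w)},
\]
which is the desired estimate.

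The only genuine obstacle is the second step: one must confirm that Wilson's pointwise comparison is truly uniform in the underlying function, so that it survives the substitution of the $x$-dependent function $F_x$. Once this is checked, the remainder of the argument is purely formal and requires no weighted estimates beyond those already packaged into Theorem 1.2. I would also record the minor bookkeeping point that, for $b\in BMO(\mathbb R^n)$ and $f\in L^{p,\kappa}(w)$, the function $F_x$ is locally integrable for a.e. $x$, so that $g_\alpha(F_x)$ and $\mathcal S_\alpha(F_x)$ are well defined and the identities above make sense pointwise almost everywhere.
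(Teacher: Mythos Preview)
Your proposal is correct and follows exactly the approach the paper intends: the paper derives Corollary~1.6 as a ``direct consequence'' of Theorem~1.2 via Wilson's pointwise comparability of $g_\alpha$ and $\mathcal S_\alpha$, and your argument makes explicit the one nontrivial detail the paper suppresses, namely that the comparison transfers to the commutators because both $[b,g_\alpha](f)(x)$ and $[b,\mathcal S_\alpha](f)(x)$ are the corresponding intrinsic square functions applied to the same auxiliary function $F_x$, with Wilson's constant independent of that function.
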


\section{Notations and definitions}

The classical $A_p$ weight theory was first introduced by Muckenhoupt in the study of weighted $L^p$ boundedness of Hardy-Littlewood maximal functions in \cite{muckenhoupt1}. A weight $w$ is a nonnegative, locally integrable function on $\mathbb R^n$, $B=B(x_0,r_B)$ denotes the ball with the center $x_0$ and radius $r_B$. Given a ball $B$ and $\lambda>0$, $\lambda B$ denotes the ball with the same center as $B$ whose radius is $\lambda$ times that of $B$. For a given weight function $w$ and a measurable set $E$, we also denote the Lebesgue measure of $E$ by $|E|$ and the weighted measure of $E$ by $w(E)$, where $w(E)=\int_E w(x)\,dx$. We say that $w$ is in the Muckenhoupt class $A_p$ with $1<p<\infty$, if there exists a constant $C>0$ such that for every ball $B\subseteq \mathbb R^n$,
\begin{equation*}
\left(\frac1{|B|}\int_B w(x)\,dx\right)\left(\frac1{|B|}\int_B w(x)^{-1/{(p-1)}}\,dx\right)^{p-1}\le C.
\end{equation*}
The smallest constant $C$ such that the above inequality holds is called the $A_p$ characteristic constant of $w$ and denoted by $[w]_{A_p}$. A weight function $w$ is said to belong to the reverse H\"{o}lder class $RH_r$ if there exist two constants $r>1$ and
$C>0$ such that the following reverse H\"{o}lder inequality holds for every
ball $B\subseteq \mathbb R^n$.
\begin{equation*}
\left(\frac{1}{|B|}\int_B w(x)^r\,dx\right)^{1/r}\le C\left(\frac{1}{|B|}\int_B w(x)\,dx\right).
\end{equation*}

We state the following results that we will use frequently in the sequel.

\newtheorem{lemma}[theorem]{Lemma}

\begin{lemma}[\cite{garcia2}]
Let $w\in A_p$ with $1<p<\infty$. Then, for any ball $B$, there exists an absolute constant $C>0$ such that
\begin{equation*}
w(2B)\le C\,w(B).
\end{equation*}
In general, for any $\lambda>1$, we have
\begin{equation*}
w(\lambda B)\le C\cdot\lambda^{np}w(B),
\end{equation*}
where $C$ does not depend on $B$ nor on $\lambda$.
\end{lemma}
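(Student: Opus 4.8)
The plan is to deduce the stated doubling estimate from a single comparison inequality between the Lebesgue measure and the weighted measure of nested sets, which is the standard quantitative manifestation of the $A_p$ condition. First I would establish that for any ball $B$ and any measurable subset $E\subseteq B$ one has
\begin{equation*}
\frac{|E|}{|B|}\le [w]_{A_p}^{1/p}\left(\frac{w(E)}{w(B)}\right)^{1/p}.
\end{equation*}

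To obtain this, I would begin from the trivial identity $|E|=\int_E w(x)^{1/p}w(x)^{-1/p}\,dx$ and apply H\"older's inequality with exponents $p$ and $p'=p/(p-1)$, which gives
\begin{equation*}
|E|\le\left(\int_E w\,dx\right)^{1/p}\left(\int_E w^{-1/(p-1)}\,dx\right)^{(p-1)/p}.
\end{equation*}
Enlarging the domain of the second integral from $E$ to $B$ and then invoking the $A_p$ condition on $B$ to control $\int_B w^{-1/(p-1)}\,dx$ by $[w]_{A_p}^{1/(p-1)}|B|^{p/(p-1)}w(B)^{-1/(p-1)}$ yields, after collecting the $(p-1)/p$ powers and rearranging, exactly the displayed comparison inequality.

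Finally, I would apply this inequality with the dilated ball $\lambda B$ playing the role of $B$ and the original ball $B$ playing the role of $E$; since $E\subseteq\lambda B$ and $|B|/|\lambda B|=\lambda^{-n}$, this reads
\begin{equation*}
\lambda^{-n}\le [w]_{A_p}^{1/p}\left(\frac{w(B)}{w(\lambda B)}\right)^{1/p}.
\end{equation*}
Raising both sides to the $p$-th power and rearranging produces $w(\lambda B)\le [w]_{A_p}\,\lambda^{np}\,w(B)$, which is the asserted bound with $C=[w]_{A_p}$; the case $\lambda=2$ then follows as the immediate special instance.

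There is essentially no serious obstacle here, since the argument is a direct two-step computation. The only points that require care are the bookkeeping of the exponents when the $A_p$ condition is used to eliminate the integral of $w^{-1/(p-1)}$, and the verification that the resulting constant is precisely $[w]_{A_p}$, so that it depends neither on $B$ nor on $\lambda$ and the estimate is uniform exactly as claimed.
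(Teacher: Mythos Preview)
Your argument is correct: the comparison inequality $|E|/|B|\le [w]_{A_p}^{1/p}(w(E)/w(B))^{1/p}$ follows exactly as you describe from H\"older's inequality together with the $A_p$ condition, and specializing to $E=B\subseteq\lambda B$ gives $w(\lambda B)\le [w]_{A_p}\,\lambda^{np}w(B)$ with a constant independent of $B$ and $\lambda$.

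As for comparison with the paper: the paper does not supply a proof of this lemma at all. It is stated with a citation to Garc\'ia-Cuerva and Rubio de Francia and used as a black box throughout. Your derivation is the standard textbook route and is exactly the kind of argument that reference contains; in particular, the intermediate inequality you establish is itself a well-known characterization-type estimate for $A_p$ weights. One minor remark: the first assertion in the lemma, $w(2B)\le C\,w(B)$, is of course the special case $\lambda=2$ of the second, but in many presentations it is stated separately because it already follows from the weaker hypothesis $w\in A_\infty$ (the doubling property), whereas the precise exponent $np$ genuinely requires $w\in A_p$. Your proof handles both simultaneously, which is perfectly fine here since the hypothesis is $w\in A_p$.
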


\begin{lemma}[\cite{gundy}]
Let $w\in RH_r$ with $r>1$. Then there exists a constant $C>0$ such that
\begin{equation*}
\frac{w(E)}{w(B)}\le C\left(\frac{|E|}{|B|}\right)^{(r-1)/r}
\end{equation*}
for any measurable subset $E$ of a ball $B$.
\end{lemma}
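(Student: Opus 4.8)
The plan is to deduce the estimate directly from Hölder's inequality together with the reverse Hölder inequality defining $RH_r$. First I would write the weighted measure of $E$ as an integral over the whole ball against the indicator function $\chi_E$, namely $w(E)=\int_B w(x)\chi_E(x)\,dx$, which puts the problem in a form where a single application of Hölder separates the weight from the set $E$.

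Next, applying Hölder's inequality on $B$ with the exponent $r$ (the reverse Hölder exponent) and its conjugate $r'=r/(r-1)$ gives
$$w(E)\le\Big(\int_B w(x)^r\,dx\Big)^{1/r}\Big(\int_B \chi_E(x)\,dx\Big)^{1/r'}=\Big(\int_B w(x)^r\,dx\Big)^{1/r}|E|^{(r-1)/r},$$
since $1/r'=(r-1)/r$. The factor $|E|^{(r-1)/r}$ already matches the right-hand side of the claim, so it remains only to control the $L^r$ integral of $w$ over $B$.

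To do this I would invoke the hypothesis $w\in RH_r$, which after multiplying through by $|B|$ reads $\big(\int_B w(x)^r\,dx\big)^{1/r}\le C|B|^{1/r}\cdot\frac{1}{|B|}\int_B w(x)\,dx=C|B|^{1/r-1}w(B)$. Substituting this into the previous display and using $|B|^{1/r-1}=|B|^{-(r-1)/r}$ yields
$$w(E)\le C\,w(B)\,|B|^{-(r-1)/r}|E|^{(r-1)/r}=C\,w(B)\Big(\frac{|E|}{|B|}\Big)^{(r-1)/r},$$
and dividing by $w(B)$ gives the assertion.

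There is no genuine obstacle here: the argument is a two-line pairing of Hölder's inequality with the reverse Hölder condition, and the only point requiring care is the bookkeeping of exponents, where the conjugate exponent $r'$ of the reverse Hölder index $r$ is precisely what produces the target power $(r-1)/r$. The constant $C$ depends only on the reverse Hölder constant of $w$, and hence is independent of both $B$ and $E$.
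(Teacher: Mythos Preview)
Your argument is correct and is the standard derivation of this estimate from the reverse H\"older condition via H\"older's inequality. Note, however, that the paper does not supply its own proof of this lemma: it is stated as a known result and attributed to \cite{gundy}, so there is no in-paper proof to compare against.
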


Given a weight function $w$ on $\mathbb R^n$, for $1<p<\infty$, we denote by $L^p_w(\mathbb R^n)$ the space of all functions satisfying
\begin{equation*}
\|f\|_{L^p_w}=\bigg(\int_{\mathbb R^n}|f(x)|^pw(x)\,dx\bigg)^{1/p}<\infty.
\end{equation*}

A locally integrable function $b$ is said to be in $BMO(\mathbb R^n)$ if
\begin{equation*}
\|b\|_*=\sup_{B}\frac{1}{|B|}\int_B|b(x)-b_B|\,dx<\infty,
\end{equation*}
where $b_B$ stands for the average of $b$ on $B$, i.e., $b_B=\frac{1}{|B|}\int_B b(y)\,dy$ and the supremum is taken
over all balls $B$ in $\mathbb R^n$.

\begin{theorem}[\cite{duoand,john}]
Assume that $b\in BMO(\mathbb R^n)$. Then for any $1\le p<\infty$, we have
\begin{equation*}
\sup_B\bigg(\frac{1}{|B|}\int_B\big|b(x)-b_B\big|^p\,dx\bigg)^{1/p}\le C\|b\|_*.
\end{equation*}
\end{theorem}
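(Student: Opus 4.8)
The plan is to deduce this $L^p$ estimate from the John--Nirenberg inequality, which is the genuine heart of the matter. Recall that the John--Nirenberg inequality asserts the existence of dimensional constants $c_1,c_2>0$ such that for every $b\in BMO(\mathbb R^n)$, every ball $B$, and every $\lambda>0$,
\begin{equation*}
\big|\{x\in B:|b(x)-b_B|>\lambda\}\big|\le c_1|B|\exp\!\Big(-\frac{c_2\lambda}{\|b\|_*}\Big).
\end{equation*}
I would first establish (or simply invoke, as in \cite{john}) this exponential decay estimate. Proving it from scratch requires a Calder\'on--Zygmund stopping-time decomposition of $B$ into dyadic subcubes together with an inductive control of the distribution function, and this is the one genuinely nontrivial step.

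Granting the exponential estimate, the passage to the $L^p$ bound is a routine computation via the layer-cake (distribution function) formula. For a fixed ball $B$ and $1\le p<\infty$, I would write
\begin{equation*}
\frac{1}{|B|}\int_B|b(x)-b_B|^p\,dx=\frac{p}{|B|}\int_0^\infty\lambda^{p-1}\big|\{x\in B:|b(x)-b_B|>\lambda\}\big|\,d\lambda,
\end{equation*}
and then substitute the John--Nirenberg estimate into the right-hand side. The factor $|B|$ cancels, yielding
\begin{equation*}
\frac{1}{|B|}\int_B|b(x)-b_B|^p\,dx\le c_1p\int_0^\infty\lambda^{p-1}\exp\!\Big(-\frac{c_2\lambda}{\|b\|_*}\Big)\,d\lambda.
\end{equation*}

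The remaining integral is evaluated by the change of variables $s=c_2\lambda/\|b\|_*$, which turns it into $(\|b\|_*/c_2)^p\,\Gamma(p)$ times the constant $c_1p$. Taking $p$-th roots gives
\begin{equation*}
\bigg(\frac{1}{|B|}\int_B|b(x)-b_B|^p\,dx\bigg)^{1/p}\le\frac{(c_1p\,\Gamma(p))^{1/p}}{c_2}\,\|b\|_*,
\end{equation*}
where the constant $C=(c_1p\,\Gamma(p))^{1/p}/c_2$ depends only on $n$ and $p$, not on $B$ nor on $b$. Since this bound is uniform in $B$, taking the supremum over all balls $B$ completes the argument. The main obstacle, as noted, is the John--Nirenberg inequality itself; once it is in hand, everything else is elementary.
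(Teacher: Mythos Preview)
Your argument is correct and is precisely the standard derivation: invoke the John--Nirenberg exponential decay estimate, write the $L^p$ average via the distribution-function (layer-cake) formula, substitute, and evaluate the resulting Gamma integral. The computation is clean, and the only substantive input---the John--Nirenberg inequality itself---is correctly identified and stated. One small omission: when $\|b\|_*=0$ the function $b$ is a.e.\ constant and the inequality is trivial, so you may want to dispose of that case before dividing by $\|b\|_*$ in the exponential.

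Regarding comparison with the paper: the paper does \emph{not} give its own proof of this statement. Theorem~2.3 is simply quoted with a citation to \cite{duoand,john} and then used later as a black box (for instance in the estimate~(5) in the proof of Theorem~1.2). So there is nothing to compare your approach against within the paper itself. What you have written is essentially the textbook proof that one finds in the cited references (e.g.\ Duoandikoetxea's \emph{Fourier Analysis}), and it would serve perfectly well as a self-contained justification if the paper wished to include one.
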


\newtheorem{defn}[theorem]{Definition}
\begin{defn}[\cite{komori}]
Let $1\le p<\infty$, $0<\kappa<1$ and $w$ be a weight function. Then the weighted Morrey space is defined by
\begin{equation*}
L^{p,\kappa}(w)=\big\{f\in L^p_{loc}(w):\|f\|_{L^{p,\kappa}(w)}<\infty\big\},
\end{equation*}
where
\begin{equation*}
\|f\|_{L^{p,\kappa}(w)}=\sup_B\left(\frac{1}{w(B)^\kappa}\int_B|f(x)|^pw(x)\,dx\right)^{1/p}
\end{equation*}
and the supremum is taken over all balls $B$ in $\mathbb R^n$.
\end{defn}
Throughout this article, we will use $C$ to denote a positive constant, which is independent of the main parameters
and not necessarily the same at each occurrence. Moreover, we will denote the conjugate exponent of $p>1$ by $p'=p/{(p-1)}$.

\section{Proofs of Theorems 1.1 and 1.2}

\begin{proof}[Proof of Theorem 1.1]
Fix a ball $B=B(x_0,r_B)\subseteq\mathbb R^n$ and decompose $f=f_1+f_2$, where $f_1=f\chi_{_{2B}}$, $\chi_{_{2B}}$ denotes the characteristic function of $2B$. Since $\mathcal S_\alpha$($0<\alpha\le1$) is a sublinear operator, then we have
\begin{equation*}
\begin{split}
&\frac{1}{w(B)^{\kappa/p}}\bigg(\int_B|\mathcal S_\alpha(f)(x)|^pw(x)\,dx\bigg)^{1/p}\\
\le\,&\frac{1}{w(B)^{\kappa/p}}\bigg(\int_B|\mathcal S_\alpha(f_1)(x)|^pw(x)\,dx\bigg)^{1/p}+
\frac{1}{w(B)^{\kappa/p}}\bigg(\int_B|\mathcal S_\alpha(f_2)(x)|^pw(x)\,dx\bigg)^{1/p}\\
=\,&I_1+I_2.
\end{split}
\end{equation*}
Theorem A and Lemma 2.1 imply
\begin{equation*}
\begin{split}
I_1&\le C\cdot\frac{1}{w(B)^{\kappa/p}}\bigg(\int_{2B}|f(x)|^pw(x)\,dx\bigg)^{1/p}\\
&\le C\|f\|_{L^{p,\kappa}(w)}\cdot\frac{w(2B)^{\kappa/p}}{w(B)^{\kappa/p}}\\
&\le C\|f\|_{L^{p,\kappa}(w)}.
\end{split}
\end{equation*}
We now turn to estimate the other term $I_2$. For any $\varphi\in{\mathcal C}_\alpha$, $0<\alpha\le1$ and $(y,t)\in\Gamma(x)$, we have
\begin{align}
\big|f_2*\varphi_t(y)\big|&=\bigg|\int_{(2B)^c}\varphi_t(y-z)f(z)\,dz\bigg|\notag\\
&\le C\cdot t^{-n}\int_{(2B)^c\cap\{z:|y-z|\le t\}}|f(z)|\,dz\notag\\
&\le C\cdot t^{-n}\sum_{k=1}^\infty\int_{(2^{k+1}B\backslash 2^{k}B)\cap\{z:|y-z|\le t\}}|f(z)|\,dz.
\end{align}
For any $x\in B$, $(y,t)\in\Gamma(x)$ and $z\in\big(2^{k+1}B\backslash 2^{k}B\big)\cap B(y,t)$, then by a direct computation, we can easily see that
\begin{equation*}
2t\ge |x-y|+|y-z|\ge|x-z|\ge|z-x_0|-|x-x_0|\ge 2^{k-1}r_B.
\end{equation*}
Thus, by using the above inequality (1) and Minkowski's integral inequality, we deduce
\begin{equation*}
\begin{split}
\big|\mathcal S_\alpha(f_2)(x)\big|&=\left(\iint_{\Gamma(x)}\sup_{\varphi\in{\mathcal C}_\alpha}|f_2*\varphi_t(y)|^2\frac{dydt}{t^{n+1}}\right)^{1/2}\\
&\le C\left(\int_{2^{k-2}r_B}^\infty\int_{|x-y|<t}\bigg|t^{-n}\sum_{k=1}^\infty\int_{2^{k+1}B\backslash 2^{k}B}|f(z)|\,dz\bigg|^2\frac{dydt}{t^{n+1}}\right)^{1/2}\\
&\le C\bigg(\sum_{k=1}^\infty\int_{2^{k+1}B\backslash 2^{k}B}|f(z)|\,dz\bigg)\bigg(\int_{2^{k-2}r_B}^\infty\frac{dt}{t^{2n+1}}\bigg)^{1/2}\\
&\le C\sum_{k=1}^\infty\frac{1}{|2^{k+1}B|}\int_{2^{k+1}B\backslash 2^{k}B}|f(z)|\,dz.
\end{split}
\end{equation*}
It follows from H\"older's inequality and the $A_p$ condition that
\begin{align}
\frac{1}{|2^{k+1}B|}\int_{2^{k+1}B}|f(z)|\,dz&\le\frac{1}{|2^{k+1}B|}\bigg(\int_{2^{k+1}B}|f(z)|^pw(z)\,dz\bigg)^{1/p}
\bigg(\int_{2^{k+1}B}w(z)^{-{p'}/p}\,dz\bigg)^{1/{p'}}\notag\\
&\le C\|f\|_{L^{p,\kappa}(w)}\cdot w\big(2^{k+1}B\big)^{(\kappa-1)/p}.
\end{align}
Hence
\begin{equation*}
I_2\le C\|f\|_{L^{p,\kappa}(w)}\sum_{k=1}^\infty\frac{w(B)^{(1-\kappa)/p}}{w(2^{k+1}B)^{(1-\kappa)/p}}.
\end{equation*}
Since $w\in A_p$ with $1<p<\infty$, then there exists a number $r>1$ such that $w\in RH_r$. Consequently, by using Lemma 2.2, we can get
\begin{equation}
\frac{w(B)}{w(2^{k+1}B)}\le C\left(\frac{|B|}{|2^{k+1}B|}\right)^{(r-1)/r}.
\end{equation}
Therefore
\begin{equation*}
\begin{split}
I_2&\le C\|f\|_{L^{p,\kappa}(w)}\sum_{k=1}^\infty\left(\frac{1}{2^{kn}}\right)^{{(1-\kappa)(r-1)}/{pr}}\\
&\le C\|f\|_{L^{p,\kappa}(w)},
\end{split}
\end{equation*}
where the last series is convergent since ${(1-\kappa)(r-1)}/{pr}>0$. Combining the above estimates for $I_1$ and $I_2$ and taking the supremum over all balls $B\subseteq\mathbb R^n$, we complete the proof of Theorem 1.1.
\end{proof}

Given a real-valued function $b\in BMO(\mathbb R^n)$, we shall follow the idea developed in \cite{alvarez,ding} and denote $F(\xi)=e^{\xi[b(x)-b(z)]}$, $\xi\in\mathbb C$. Then by the analyticity of $F(\xi)$ on $\mathbb C$ and the Cauchy integral formula, we get
\begin{equation*}
\begin{split}
b(x)-b(z)=F'(0)&=\frac{1}{2\pi i}\int_{|\xi|=1}\frac{F(\xi)}{\xi^2}\,d\xi\\
&=\frac{1}{2\pi}\int_0^{2\pi}e^{e^{i\theta}[b(x)-b(z)]}e^{-i\theta}\,d\theta.
\end{split}
\end{equation*}
Thus, for any $\varphi\in{\mathcal C}_\alpha$, $0<\alpha\le1$, we obtain
\begin{equation*}
\begin{split}
\bigg|\int_{\mathbb R^n}\big[b(x)-b(z)\big]\varphi_t(y-z)f(z)\,dz\bigg|&=
\bigg|\frac{1}{2\pi}\int_0^{2\pi}\bigg(\int_{\mathbb R^n}\varphi_t(y-z)e^{-e^{i\theta}b(z)}f(z)\,dz\bigg)
e^{e^{i\theta}b(x)}e^{-i\theta}\,d\theta\bigg|\\
&\le\frac{1}{2\pi}\int_0^{2\pi}\sup_{\varphi\in{\mathcal C}_\alpha}\bigg|\int_{\mathbb R^n}\varphi_t(y-z)e^{-e^{i\theta}b(z)}f(z)\,dz\bigg|e^{\cos\theta\cdot b(x)}\,d\theta\\
&\le\frac{1}{2\pi}\int_0^{2\pi}A_\alpha\big(e^{-e^{i\theta}b}\cdot f\big)(y,t)\cdot e^{\cos\theta\cdot b(x)}\,d\theta.
\end{split}
\end{equation*}
So we have
\begin{equation*}
\big|\big[b,\mathcal S_\alpha\big](f)(x)\big|\le\frac{1}{2\pi}\int_0^{2\pi}
\mathcal S_\alpha\big(e^{-e^{i\theta}b}\cdot f\big)(x)\cdot e^{\cos\theta\cdot b(x)}\,d\theta,
\end{equation*}
\begin{equation*}
\big|\big[b,g^*_{\lambda,\alpha}\big](f)(x)\big|\le\frac{1}{2\pi}\int_0^{2\pi}
g^*_{\lambda,\alpha}\big(e^{-e^{i\theta}b}\cdot f\big)(x)\cdot e^{\cos\theta\cdot b(x)}\,d\theta.
\end{equation*}
Then, by using the same arguments as in \cite{ding}, we can also show the following
\begin{theorem}
Let $0<\alpha\le1$, $1<p<\infty$ and $w\in A_p$. Then the commutators $\big[b,\mathcal S_\alpha\big]$ and $\big[b,g^*_{\lambda,\alpha}\big]$ are all bounded from $L^p_w(\mathbb R^n)$ into itself whenever $b\in BMO(\mathbb R^n)$.
\end{theorem}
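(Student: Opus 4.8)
The plan is to convert the two pointwise estimates just established into weighted norm inequalities, following \cite{ding}. I would take the $L^p_w(\mathbb R^n)$ norm on both sides of
\[
\big|\big[b,\mathcal S_\alpha\big](f)(x)\big|\le\frac{1}{2\pi}\int_0^{2\pi}\mathcal S_\alpha\big(e^{-e^{i\theta}b}\cdot f\big)(x)\,e^{\cos\theta\cdot b(x)}\,d\theta
\]
and apply Minkowski's integral inequality to move the norm inside the $\theta$-integral. This reduces matters to proving, uniformly in $\theta\in[0,2\pi]$, an estimate of the form
\[
\Big\|e^{\cos\theta\cdot b}\,\mathcal S_\alpha\big(e^{-e^{i\theta}b}f\big)\Big\|_{L^p_w}\le C\|f\|_{L^p_w},
\]
with $C$ independent of $\theta$ and $f$. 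The identical reduction applies to $\big[b,g^*_{\lambda,\alpha}\big]$ via its pointwise estimate, so it is enough to treat a single operator $T\in\{\mathcal S_\alpha,g^*_{\lambda,\alpha}\}$ known to be bounded on every $L^p_v$ with $v\in A_p$ (Theorem A for $\mathcal S_\alpha$, and the analogous weighted $L^p$ estimate for $g^*_{\lambda,\alpha}$).

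The heart of the matter is a change of weight together with an exponential cancellation. Raising the displayed quantity to the $p$th power, the $\theta$-slice equals $\int_{\mathbb R^n}\big|T(e^{-e^{i\theta}b}f)(x)\big|^p u_\theta(x)\,dx$ with $u_\theta:=e^{p\cos\theta\cdot b}w$. Assuming $u_\theta\in A_p$, the $L^p_{u_\theta}$ boundedness of $T$ bounds this by $C\int_{\mathbb R^n}\big|e^{-e^{i\theta}b(x)}f(x)\big|^p u_\theta(x)\,dx$. Since $b$ is real-valued, $\big|e^{-e^{i\theta}b(x)}\big|=e^{-\cos\theta\cdot b(x)}$, so $\big|e^{-e^{i\theta}b(x)}\big|^p u_\theta(x)=e^{-p\cos\theta\cdot b(x)}e^{p\cos\theta\cdot b(x)}w(x)=w(x)$: the two exponential factors cancel exactly, leaving $C\|f\|_{L^p_w}^p$ independently of $\theta$. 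Integrating in $\theta$ then yields the theorem.

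The one genuine obstacle, and where I expect to concentrate the work, is ensuring that $u_\theta=e^{p\cos\theta\cdot b}w$ lies in $A_p$ with an $A_p$ characteristic bounded uniformly in $\theta$. By the John--Nirenberg inequality and the openness of the $A_p$ condition, for $w\in A_p$ and $b\in BMO(\mathbb R^n)$ there is $\varepsilon_0>0$ depending only on $n$, $p$ and $[w]_{A_p}$ such that $e^{\gamma b}w\in A_p$ with uniformly controlled characteristic whenever $|\gamma|\,\|b\|_*\le\varepsilon_0$. As $|p\cos\theta|\le p$, this membership is only guaranteed when $\|b\|_*$ is small, so I would first prove the theorem under the normalization $\|b\|_*\le\varepsilon_0/p$ (for which $u_\theta\in A_p$ holds uniformly in $\theta$ and the argument above closes), and then remove the restriction by positive homogeneity: since $[\mu b,T]=\mu[b,T]$ for every $\mu>0$, applying the normalized estimate to $\tilde b=(\varepsilon_0/(p\|b\|_*))\,b$ and rescaling recovers a general $b\in BMO(\mathbb R^n)$ at the cost of a factor proportional to $\|b\|_*$. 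The treatment of $g^*_{\lambda,\alpha}$ is word-for-word the same once its $L^p_v$ boundedness for $v\in A_p$ is invoked.
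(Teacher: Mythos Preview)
Your proposal is correct and coincides with the route the paper takes: the paper does not spell out a proof but simply declares that, starting from the two pointwise Cauchy-integral estimates, one follows \cite{ding} (itself modeled on \cite{alvarez}); the weight-perturbation argument you describe---Minkowski in $\theta$, the modified weight $u_\theta=e^{p\cos\theta\cdot b}w\in A_p$ via John--Nirenberg when $\|b\|_*$ is small, the exact exponential cancellation $|e^{-e^{i\theta}b}|^{p}u_\theta=w$, and removal of the smallness assumption by the homogeneity $[\mu b,T]=\mu[b,T]$---is precisely that method. The only tacit hypothesis, in both the paper and your sketch, is the $L^p_v$ boundedness of $g^*_{\lambda,\alpha}$ for arbitrary $v\in A_p$ with constant depending only on $[v]_{A_p}$, which (as in Section~4) requires $\lambda>\max\{p,3\}$.
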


\begin{proof}[Proof of Theorem 1.2]
Fix a ball $B=B(x_0,r_B)\subseteq\mathbb R^n$. Let $f=f_1+f_2$, where $f_1=f\chi_{_{2B}}$. Then we can write
\begin{equation*}
\begin{split}
&\frac{1}{w(B)^{\kappa/p}}\bigg(\int_B\big|\big[b,\mathcal S_\alpha\big](f)(x)\big|^pw(x)\,dx\bigg)^{1/p}\\
\le\,&\frac{1}{w(B)^{\kappa/p}}\bigg(\int_B\big|\big[b,\mathcal S_\alpha\big](f_1)(x)\big|^pw(x)\,dx\bigg)^{1/p}+
\frac{1}{w(B)^{\kappa/p}}\bigg(\int_B\big|\big[b,\mathcal S_\alpha\big](f_2)(x)\big|^pw(x)\,dx\bigg)^{1/p}\\
=\,&J_1+J_2.
\end{split}
\end{equation*}
Applying Theorem 3.1 and Lemma 2.1, we thus obtain
\begin{align}
J_1&\le C\cdot\frac{1}{w(B)^{\kappa/p}}\bigg(\int_{2B}|f(x)|^pw(x)\,dx\bigg)^{1/p}\notag\\
&\le C\|f\|_{L^{p,\kappa}(w)}\cdot\frac{w(2B)^{\kappa/p}}{w(B)^{\kappa/p}}\notag\\
&\le C\|f\|_{L^{p,\kappa}(w)}.
\end{align}
We now turn to deal with the term $J_2$. For any given $x\in B$ and $(y,t)\in\Gamma(x)$, we have
\begin{equation*}
\begin{split}
\sup_{\varphi\in{\mathcal C}_\alpha}\bigg|\int_{\mathbb R^n}\big[b(x)-b(z)\big]\varphi_t(y-z)f_2(z)\,dz\bigg|&\le
\big|b(x)-b_B\big|\cdot\sup_{\varphi\in{\mathcal C}_\alpha}\bigg|\int_{\mathbb R^n}\varphi_t(y-z)f_2(z)\,dz\bigg|\\
&+\sup_{\varphi\in{\mathcal C}_\alpha}\bigg|\int_{\mathbb R^n}\big[b(z)-b_B\big]\varphi_t(y-z)f_2(z)\,dz\bigg|
\end{split}
\end{equation*}
Hence
\begin{equation*}
\begin{split}
\big|\big[b,\mathcal S_\alpha\big](f_2)(x)\big|&\le\big|b(x)-b_B\big|\cdot\mathcal S_\alpha(f_2)(x)\\
&+\left(\iint_{\Gamma(x)}\sup_{\varphi\in{\mathcal C}_\alpha}\bigg|\int_{\mathbb R^n}\big[b(z)-b_B\big]\varphi_t(y-z)f_2(z)\,dz\bigg|^2\frac{dydt}{t^{n+1}}\right)^{1/2}\\
&=\mbox{\upshape I+II}.
\end{split}
\end{equation*}
In the proof of Theorem 1.1, we have already proved that for any $x\in B$,
\begin{equation*}
\big|\mathcal S_\alpha(f_2)(x)\big|\le C\|f\|_{L^{p,\kappa}(w)}\cdot \sum_{k=1}^\infty w\big(2^{k+1}B\big)^{(\kappa-1)/p}.
\end{equation*}
Consequently
\begin{equation*}
\begin{split}
&\frac{1}{w(B)^{\kappa/p}}\bigg(\int_B \mbox{\upshape I}^p\,w(x)\,dx\bigg)^{1/p}\\
\le \,&C\|f\|_{L^{p,\kappa}(w)}\frac{1}{w(B)^{\kappa/p}}\cdot\sum_{k=1}^\infty w\big(2^{k+1}B\big)^{(\kappa-1)/p}\cdot\bigg(\int_B\big|b(x)-b_B\big|^pw(x)\,dx\bigg)^{1/p}\\
=\,&C\|f\|_{L^{p,\kappa}(w)}\sum_{k=1}^\infty\frac{w(B)^{(1-\kappa)/p}}{w(2^{k+1}B)^{(1-\kappa)/p}}
\cdot\bigg(\frac{1}{w(B)}\int_B\big|b(x)-b_B\big|^pw(x)\,dx\bigg)^{1/p}.
\end{split}
\end{equation*}
Using the same arguments as that of Theorem 1.1, we can see that the above summation is bounded by a constant. Hence
\begin{equation*}
\frac{1}{w(B)^{\kappa/p}}\bigg(\int_B \mbox{\upshape I}^p\,w(x)\,dx\bigg)^{1/p}\le C\|f\|_{L^{p,\kappa}(w)}\bigg(\frac{1}{w(B)}\int_B\big|b(x)-b_B\big|^pw(x)\,dx\bigg)^{1/p}.
\end{equation*}
Since $w\in A_p$, as before, we know that there exists a number $r>1$ such that $w\in RH_r$. Thus by H\"older's inequality and Theorem 2.3, we deduce
\begin{align}
&\bigg(\frac{1}{w(B)}\int_B\big|b(x)-b_B\big|^pw(x)\,dx\bigg)^{1/p}\notag\\
\le\, &\frac{1}{w(B)^{1/p}}\bigg(\int_B\big|b(x)-b_B\big|^{pr'}\,dx\bigg)^{1/{(pr')}}
\bigg(\int_Bw(x)^r\,dx\bigg)^{1/{(pr)}}\notag\\
\le\, &C\cdot\bigg(\frac{1}{|B|}\int_B\big|b(x)-b_B\big|^{pr'}\,dx\bigg)^{1/{(pr')}}\notag\\
\le\, &C\|b\|_*.
\end{align}
So we have
\begin{equation}
\frac{1}{w(B)^{\kappa/p}}\bigg(\int_B \mbox{\upshape I}^p\,w(x)\,dx\bigg)^{1/p}\le C\|b\|_*\|f\|_{L^{p,\kappa}(w)}.
\end{equation}
On the other hand
\begin{equation*}
\begin{split}
\mbox{\upshape II}&=\left(\iint_{\Gamma(x)}\sup_{\varphi\in{\mathcal C}_\alpha}\bigg|\int_{(2B)^c}\big[b(z)-b_B\big]\varphi_t(y-z)f(z)\,dz\bigg|^2\frac{dydt}{t^{n+1}}\right)^{1/2}\\
&\le C\left(\iint_{\Gamma(x)}
\bigg|t^{-n}\sum_{k=1}^\infty\int_{(2^{k+1}B\backslash 2^{k}B)\cap\{z:|y-z|\le t\}}|b(z)-b_B||f(z)|\,dz\bigg|^2\frac{dydt}{t^{n+1}}\right)^{1/2}\\
&\le C\left(\iint_{\Gamma(x)}
\bigg|t^{-n}\sum_{k=1}^\infty\int_{(2^{k+1}B\backslash 2^{k}B)\cap\{z:|y-z|\le t\}}\big|b(z)-b_{2^{k+1}B}\big||f(z)|\,dz\bigg|^2\frac{dydt}{t^{n+1}}\right)^{1/2}\\
&+C\left(\iint_{\Gamma(x)}
\bigg|t^{-n}\sum_{k=1}^\infty\big|b_{2^{k+1}B}-b_B\big|\cdot\int_{(2^{k+1}B\backslash 2^{k}B)\cap\{z:|y-z|\le t\}}|f(z)|\,dz\bigg|^2\frac{dydt}{t^{n+1}}\right)^{1/2}\\
&= \mbox{\upshape III+IV}.
\end{split}
\end{equation*}
An application of H\"older's inequality gives us that
\begin{align}
&\int_{2^{k+1}B\backslash 2^{k}B}\big|b(z)-b_{2^{k+1}B}\big||f(z)|\,dz\notag\\
\le&\,\bigg(\int_{2^{k+1}B}\big|b(z)-b_{2^{k+1}B}\big|^{p'}w(z)^{-{p'}/p}\,dz\bigg)^{1/{p'}}
\bigg(\int_{2^{k+1}B}\big|f(z)\big|^pw(z)\,dz\bigg)^{1/p}\notag\\
\le&\,C\|f\|_{L^{p,\kappa}(w)}\cdot w\big(2^{k+1}B\big)^{\kappa/p}
\bigg(\int_{2^{k+1}B}\big|b(z)-b_{2^{k+1}B}\big|^{p'}w(z)^{-{p'}/p}\,dz\bigg)^{1/{p'}}.
\end{align}
If we set $\nu(z)=w(z)^{-{p'}/p}=w(z)^{1-p'}$, then we have $\nu\in A_{p'}$ because $w\in A_p$ (see \cite{garcia2}). Following along the same lines as in the proof of (5), we can also show
\begin{equation}
\bigg(\frac{1}{\nu(2^{k+1}B)}\int_{2^{k+1}B}\big|b(z)-b_{2^{k+1}B}\big|^{p'}\nu(z)\,dz\bigg)^{1/{p'}}\le C\|b\|_*.
\end{equation}
Substituting the above inequality (8) into (7), we thus obtain
\begin{equation*}
\begin{split}
\int_{2^{k+1}B}\big|b(z)-b_{2^{k+1}B}\big||f(z)|\,dz&\le C\|b\|_*\|f\|_{L^{p,\kappa}(w)}\cdot w\big(2^{k+1}B\big)^{\kappa/p}\nu\big(2^{k+1}B\big)^{1/{p'}}\\
&\le C\|b\|_*\|f\|_{L^{p,\kappa}(w)}\cdot\big|2^{k+1}B\big|w\big(2^{k+1}B\big)^{{(\kappa-1)}/p}.
\end{split}
\end{equation*}
In addition, we note that in this case, $t\ge2^{k-2}r_B$ as in Theorem 1.1. From the above inequality, it follows that
\begin{equation*}
\begin{split}
\mbox{\upshape III}&\le C\left(\int_{2^{k-2}r_B}^\infty\int_{|x-y|<t}\bigg|t^{-n}\sum_{k=1}^\infty\int_{2^{k+1}B\backslash 2^{k}B}\big|b(z)-b_{2^{k+1}B}\big||f(z)|\,dz\bigg|^2\frac{dydt}{t^{n+1}}\right)^{1/2}\\
&\le C\bigg(\sum_{k=1}^\infty\int_{2^{k+1}B\backslash 2^{k}B}\big|b(z)-b_{2^{k+1}B}\big||f(z)|\,dz\bigg)\bigg(\int_{2^{k-2}r_B}^\infty\frac{dt}{t^{2n+1}}\bigg)^{1/2}\\
&\le C\|b\|_*\|f\|_{L^{p,\kappa}(w)}\cdot w\big(2^{k+1}B\big)^{{(\kappa-1)}/p}.
\end{split}
\end{equation*}
Hence
\begin{align}
\frac{1}{w(B)^{\kappa/p}}\bigg(\int_B \mbox{\upshape III}^p\,w(x)\,dx\bigg)^{1/p}&\le C\|b\|_*\|f\|_{L^{p,\kappa}(w)}\sum_{k=1}^\infty\frac{w(B)^{(1-\kappa)/p}}{w(2^{k+1}B)^{(1-\kappa)/p}}\notag\\
&\le C\|b\|_*\|f\|_{L^{p,\kappa}(w)}.
\end{align}
Now let us deal with the last term \mbox{\upshape IV}. Since $b\in BMO(\mathbb R^n)$, then a simple calculation shows that
\begin{equation}
\big|b_{2^{k+1}B}-b_B\big|\le C\cdot(k+1)\|b\|_*.
\end{equation}
It follows from the inequalities (2) and (10) that
\begin{equation*}
\begin{split}
\mbox{\upshape IV}&\le C\left(\int_{2^{k-2}r_B}^\infty\int_{|x-y|<t}\bigg|t^{-n}\sum_{k=1}^\infty\big|b_{2^{k+1}B}-b_B\big|
\cdot\int_{2^{k+1}B\backslash 2^{k}B}|f(z)|\,dz\bigg|^2\frac{dydt}{t^{n+1}}\right)^{1/2}\\
&\le C\|b\|_*\bigg(\sum_{k=1}^\infty(k+1)\cdot\int_{2^{k+1}B\backslash 2^{k}B}|f(z)|\,dz\bigg)\bigg(\int_{2^{k-2}r_B}^\infty\frac{dt}{t^{2n+1}}\bigg)^{1/2}\\
&\le C\|b\|_*\|f\|_{L^{p,\kappa}(w)}\sum_{k=1}^\infty(k+1)\cdot w\big(2^{k+1}B\big)^{{(\kappa-1)}/p}.
\end{split}
\end{equation*}
Therefore
\begin{align}
\frac{1}{w(B)^{\kappa/p}}\bigg(\int_B \mbox{\upshape IV}^p\,w(x)\,dx\bigg)^{1/p}&\le C\|b\|_*\|f\|_{L^{p,\kappa}(w)}\sum_{k=1}^\infty (k+1)\cdot\frac{w(B)^{(1-\kappa)/p}}{w(2^{k+1}B)^{(1-\kappa)/p}}\notag\\
&\le C\|b\|_*\|f\|_{L^{p,\kappa}(w)}\sum_{k=1}^\infty\frac{k}{2^{kn\theta}}\notag\\
&\le C\|b\|_*\|f\|_{L^{p,\kappa}(w)},
\end{align}
where we have used the previous estimate (3) with $w\in RH_r$ and $\theta=(1-\kappa)(r-1)/{pr}$. Summarizing the estimates (9) and (11) derived above, we thus obtain
\begin{equation}
\frac{1}{w(B)^{\kappa/p}}\bigg(\int_B \mbox{\upshape II}^p\,w(x)\,dx\bigg)^{1/p}\le C\|b\|_*\|f\|_{L^{p,\kappa}(w)}.
\end{equation}
Combining the inequalities (4), (6) with the above inequality (12) and taking the supremum over all balls $B\subseteq\mathbb R^n$, we complete the proof of Theorem 1.2.
\end{proof}

\section{Proofs of Theorems 1.3 and 1.4}

In order to prove the main theorems of this section, we need to establish the following three lemmas.

\begin{lemma}
Let $0<\alpha\le1$ and $w\in A_p$ with $p=2$. Then for any $j\in\mathbb Z_+$, we have
\begin{equation*}
\big\|\mathcal S_{\alpha,2^j}(f)\big\|_{L^2_w}\le C\cdot2^{jn}\big\|\mathcal S_\alpha(f)\big\|_{L^2_w}.
\end{equation*}
\end{lemma}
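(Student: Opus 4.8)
The plan is to reduce the claimed inequality to the doubling property of $A_p$ weights (Lemma 2.1) by a single application of Tonelli's theorem. The key observation is that, after squaring, the $L^2_w$ norm of either square function can be rewritten as an integral over the upper half-space ${\mathbb R}^{n+1}_+$ in which the weight enters only through the weighted measure of a ball, and the two square functions $\mathcal S_{\alpha,2^j}(f)$ and $\mathcal S_\alpha(f)$ differ solely in the radius of that ball.

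First I would write out, using that the integrand $\big(A_\alpha(f)(y,t)\big)^2t^{-n-1}w(x)$ is nonnegative so that Tonelli's theorem applies,
\begin{equation*}
\big\|\mathcal S_{\alpha,2^j}(f)\big\|_{L^2_w}^2=\int_{{\mathbb R}^n}\iint_{\Gamma_{2^j}(x)}\big(A_\alpha(f)(y,t)\big)^2\frac{dydt}{t^{n+1}}\,w(x)\,dx.
\end{equation*}
Interchanging the order of integration, the condition $|x-y|<2^j t$ identifies, for fixed $(y,t)$, the set of admissible $x$ as the ball $B(y,2^j t)$, so that
\begin{equation*}
\big\|\mathcal S_{\alpha,2^j}(f)\big\|_{L^2_w}^2=\iint_{{\mathbb R}^{n+1}_+}\big(A_\alpha(f)(y,t)\big)^2\,w\big(B(y,2^j t)\big)\,\frac{dydt}{t^{n+1}}.
\end{equation*}
The identical computation for the aperture-one cone gives
\begin{equation*}
\big\|\mathcal S_{\alpha}(f)\big\|_{L^2_w}^2=\iint_{{\mathbb R}^{n+1}_+}\big(A_\alpha(f)(y,t)\big)^2\,w\big(B(y,t)\big)\,\frac{dydt}{t^{n+1}}.
\end{equation*}

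Next I would compare the two weighted measures pointwise in $(y,t)$. Since $B(y,2^j t)=2^j B(y,t)$, Lemma 2.1 applied with $\lambda=2^j$ and $p=2$ yields
\begin{equation*}
w\big(B(y,2^j t)\big)\le C\cdot 2^{2jn}\,w\big(B(y,t)\big),
\end{equation*}
where, crucially, $C$ depends neither on the center nor on the radius of the ball, nor on $j$. Substituting this bound into the first identity and recognizing the resulting integral as $\big\|\mathcal S_\alpha(f)\big\|_{L^2_w}^2$ gives $\big\|\mathcal S_{\alpha,2^j}(f)\big\|_{L^2_w}^2\le C\cdot 2^{2jn}\big\|\mathcal S_\alpha(f)\big\|_{L^2_w}^2$, and taking square roots produces the factor $2^{jn}$ exactly as asserted.

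There is no serious obstacle in this argument; it is essentially an exercise in Tonelli's theorem combined with the quantitative doubling estimate. The only points needing a little care are verifying that the interchange of integration is legitimate (guaranteed by the nonnegativity of the integrand) and checking that the doubling constant in Lemma 2.1 is genuinely uniform in the center and radius of the ball, so that a single constant $C$ works simultaneously for all $(y,t)$ appearing in the half-space integral; this is precisely the uniformity asserted there. Finally, I would note that it is the exponent $2jn$, halved to $jn$ after the square root, that is tied to the choice $p=2$ in the statement.
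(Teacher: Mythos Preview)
Your proposal is correct and follows essentially the same approach as the paper: both compute $\|\mathcal S_{\alpha,2^j}(f)\|_{L^2_w}^2$ by interchanging the order of integration (Tonelli), reduce the comparison to $w(B(y,2^j t))\le C\cdot 2^{2jn}w(B(y,t))$ via Lemma~2.1 with $p=2$, and then take square roots. Your write-up is in fact slightly more careful in justifying the interchange and the uniformity of the doubling constant.
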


\begin{proof}
Since $w\in A_2$, then by Lemma 2.1, we get
\begin{equation*}
w\big(B(y,2^jt)\big)=w\big(2^jB(y,t)\big)\le C\cdot2^{2jn}w\big(B(y,t)\big)\quad j=1,2,\ldots.
\end{equation*}
Therefore
\begin{equation*}
\begin{split}
\big\|\mathcal S_{\alpha,2^j}(f)\big\|_{L^2_w}^2&=\int_{\mathbb R^n}\bigg(\iint_{{\mathbb R}^{n+1}_+}\Big(A_\alpha(f)(y,t)\Big)^2\chi_{|x-y|<2^j t}\frac{dydt}{t^{n+1}}\bigg)w(x)\,dx\\
&=\iint_{{\mathbb R}^{n+1}_+}\Big(\int_{|x-y|<2^j t}w(x)\,dx\Big)\Big(A_\alpha(f)(y,t)\Big)^2\frac{dydt}{t^{n+1}}\\
&\le C\cdot2^{2jn}\iint_{{\mathbb R}^{n+1}_+}\Big(\int_{|x-y|<t}w(x)\,dx\Big)\Big(A_\alpha(f)(y,t)\Big)^2\frac{dydt}{t^{n+1}}\\
&=C\cdot 2^{2jn}\big\|\mathcal S_\alpha(f)\big\|_{L^2_w}^2.
\end{split}
\end{equation*}
This finishes the proof of Lemma 4.1.
\end{proof}

\begin{lemma}
Let $0<\alpha\le1$, $2<p<\infty$ and $w\in A_p$. Then for any $j\in\mathbb Z_+$, we have
\begin{equation*}
\big\|\mathcal S_{\alpha,2^j}(f)\big\|_{L^p_w}\le C\cdot2^{jnp/2}\big\|\mathcal S_\alpha(f)\big\|_{L^p_w}.
\end{equation*}
\end{lemma}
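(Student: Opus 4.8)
The plan is to deduce the weighted $L^p$ bound for $p>2$ from the weighted $L^2$ bound of Lemma 4.1 by a duality argument, exploiting that $p/2>1$. Writing the square function as an $L^2(w)$-norm of $A_\alpha(f)$ over cones, I would first observe that
\[
\big\|\mathcal S_{\alpha,2^j}(f)\big\|_{L^p_w}^2=\big\|\big(\mathcal S_{\alpha,2^j}(f)\big)^2\big\|_{L^{p/2}_w}
=\sup\int_{\mathbb R^n}\big(\mathcal S_{\alpha,2^j}(f)(x)\big)^2 h(x)w(x)\,dx,
\]
where the supremum runs over $h\ge0$ with $\|h\|_{L^{(p/2)'}_w}=1$. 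After inserting the definition of $\mathcal S_{\alpha,2^j}$ and applying Fubini, the quantity to estimate becomes
\[
\iint_{\mathbb R^{n+1}_+}\big(A_\alpha(f)(y,t)\big)^2\bigg(\int_{|x-y|<2^jt}h(x)w(x)\,dx\bigg)\frac{dy\,dt}{t^{n+1}}.
\]

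The heart of the argument is to control the inner integral by a quantity living over the smaller aperture-one tip $B(y,t)$. I would use the geometric inclusion $B(y,2^jt)\subseteq B(\widetilde z,2^{j+1}t)$ valid for every $\widetilde z$ with $|\widetilde z-y|<t$, so that for the weighted maximal operator $M_w(h)(\xi)=\sup_{\xi\in B}\tfrac1{w(B)}\int_B h\,w$ one has $\int_{B(y,2^jt)}h\,w\le w\big(B(\widetilde z,2^{j+1}t)\big)\,M_w(h)(\widetilde z)$. The doubling estimate of Lemma 2.1 (with exponent $np$) together with $B(\widetilde z,t)\subseteq B(y,2t)$ then gives $\int_{B(y,2^jt)}h\,w\le C\,2^{jnp}\,w\big(B(y,t)\big)\,M_w(h)(\widetilde z)$ for all $\widetilde z\in B(y,t)$. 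Taking the infimum over such $\widetilde z$ and using $\inf_{B(y,t)}M_w(h)\cdot w\big(B(y,t)\big)\le\int_{|\widetilde z-y|<t}M_w(h)(\widetilde z)w(\widetilde z)\,d\widetilde z$ redistributes the factor as a cone integral, which is exactly the ingredient needed to reconstruct $\mathcal S_\alpha$.

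Substituting this bound back and applying Fubini once more (now pulling $\widetilde z$ to the outside) I would recognize
\[
\iint_{|\widetilde z-y|<t}\big(A_\alpha(f)(y,t)\big)^2\frac{dy\,dt}{t^{n+1}}=\big(\mathcal S_\alpha(f)(\widetilde z)\big)^2,
\]
so the expression is dominated by $C\,2^{jnp}\int_{\mathbb R^n}\big(\mathcal S_\alpha(f)(\widetilde z)\big)^2 M_w(h)(\widetilde z)\,w(\widetilde z)\,d\widetilde z$. Finally, Hölder's inequality with the conjugate exponents $p/2$ and $(p/2)'$ yields the factor $\|\mathcal S_\alpha(f)\|_{L^p_w}^2\cdot\|M_w(h)\|_{L^{(p/2)'}_w}$, and since $w\,dx$ is doubling (hence a space of homogeneous type) the weighted maximal operator $M_w$ is bounded on $L^{(p/2)'}_w$ for $(p/2)'>1$, giving $\|M_w(h)\|_{L^{(p/2)'}_w}\le C\|h\|_{L^{(p/2)'}_w}=C$. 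Taking the supremum over $h$ and then a square root produces the desired $2^{jnp/2}$ and completes the proof. I expect the main obstacle to be the middle step: carefully combining the ball inclusion, the $A_p$ doubling exponent $np$, and the averaging inequality so that the $2^{jnp}$ loss is extracted cleanly while the remaining cone integral regroups into $\mathcal S_\alpha(f)$; the duality setup and the $L^{(p/2)'}_w$ boundedness of $M_w$ are comparatively routine.
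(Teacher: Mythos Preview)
Your proposal is correct and follows essentially the same route as the paper: duality via $\|\mathcal S_{\alpha,2^j}(f)\|_{L^p_w}^2=\|\mathcal S_{\alpha,2^j}(f)^2\|_{L^{p/2}_w}$, Fubini, the doubling estimate of Lemma~2.1 to extract the factor $2^{jnp}$ while passing to the aperture-one cone through the weighted maximal function $M_w$, and then H\"older together with the $L^{(p/2)'}_w$ boundedness of $M_w$. The only cosmetic difference is that the paper bounds $\int_{B(y,2^jt)}g\,w$ directly by $C\,2^{jnp}w(B(y,t))\cdot\frac{1}{w(B(y,2^jt))}\int_{B(y,2^jt)}g\,w\le C\,2^{jnp}w(B(y,t))\inf_{B(y,t)}M_w(g)$, whereas you route through the inclusion $B(y,2^jt)\subseteq B(\widetilde z,2^{j+1}t)$; both lead to the same inequality. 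One small remark: your opening sentence says you will deduce the result from Lemma~4.1, but in fact your argument (like the paper's) is self-contained and never invokes the $p=2$ case.
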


\begin{proof}
For any $j\in\mathbb Z_+$, it is easy to see that
\begin{equation*}
\big\|\mathcal S_{\alpha,2^j}(f)\big\|^2_{L^p_w}=\big\|\mathcal S_{\alpha,2^j}(f)^2\big\|_{L^{p/2}_w}.
\end{equation*}
Since $p/2>1$, then we have
\begin{align}
&\big\|\mathcal S_{\alpha,2^j}(f)^2\big\|_{L^{p/2}_w}\notag\\
=&\underset{\|g\|_{L_w^{(p/2)'}}\le1}{\sup}\left|\int_{\mathbb R^n}\mathcal S_{\alpha,2^j}(f)(x)^2g(x)w(x)\,dx\right|\notag\\
=&\underset{\|g\|_{L_w^{(p/2)'}}\le1}{\sup}\left|\int_{\mathbb R^n}\bigg(\iint_{{\mathbb R}^{n+1}_+}\Big(A_\alpha(f)(y,t)\Big)^2\chi_{|x-y|<2^j t}\frac{dydt}{t^{n+1}}\bigg)g(x)w(x)\,dx\right|\notag\\
=&\underset{\|g\|_{L_w^{(p/2)'}}\le1}{\sup}\left|\iint_{{\mathbb R}^{n+1}_+}\Big(\int_{|x-y|<2^jt}g(x)w(x)\,dx\Big)\Big(A_\alpha(f)(y,t)\Big)^2 \frac{dydt}{t^{n+1}}\right|.
\end{align}
For $w\in A_p$, we denote the weighted maximal operator by $M_w$; that is
\begin{equation*}
M_w(f)(x)=\underset{x\in B}{\sup}\frac{1}{w(B)}\int_B|f(y)|w(y)\,dy,
\end{equation*}
where the supremum is taken over all balls $B$ which contain $x$. Then, by Lemma 2.1, we can get
\begin{align}
\int_{|x-y|<2^jt}g(x)w(x)\,dx&\le C\cdot2^{jnp}w\big(B(y,t)\big)\cdot\frac{1}{w(B(y,2^jt))}\int_{B(y,2^jt)}g(x)w(x)\,dx\notag\\
&\le C\cdot2^{jnp}w\big(B(y,t)\big)\underset{x\in B(y,t)}{\inf}M_w(g)(x)\notag\\
&\le C\cdot2^{jnp}\int_{|x-y|<t}M_w(g)(x)w(x)\,dx.
\end{align}
Substituting the above inequality (14) into (13) and using H\"older's inequality and the $L^{(p/2)'}_w$ boundedness of $M_w$, we thus obtain
\begin{equation*}
\begin{split}
\big\|\mathcal S_{\alpha,2^j}(f)^2\big\|_{L^{p/2}_w}&\le C\cdot2^{jnp}\underset{\|g\|_{L_w^{(p/2)'}}\le1}{\sup}
\left|\int_{\mathbb R^n}\mathcal S_\alpha(f)(x)^2M_w(g)(x)w(x)\,dx\right|\\
&\le C\cdot2^{jnp}\big\|\mathcal S_\alpha(f)^2\big\|_{L^{p/2}_w}\underset{\|g\|_{L_w^{(p/2)'}}\le1}
{\sup}\big\|M_w(g)\big\|_{L^{(p/2)'}_w}\\
&\le C\cdot2^{jnp}\big\|\mathcal S_\alpha(f)^2\big\|_{L^{p/2}_w}\\
&= C\cdot2^{jnp}\big\|\mathcal S_\alpha(f)\big\|^2_{L^p_w}.
\end{split}
\end{equation*}
This implies the desired result.
\end{proof}

\begin{lemma}
Let $0<\alpha\le1$, $1<p<2$ and $w\in A_p$. Then for any $j\in\mathbb Z_+$, we have
\begin{equation*}
\big\|\mathcal S_{\alpha,2^j}(f)\big\|_{L^p_w}\le C\cdot2^{jn}\big\|\mathcal S_\alpha(f)\big\|_{L^p_w}.
\end{equation*}
\end{lemma}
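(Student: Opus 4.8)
The plan is to reduce everything to the $L^2$ estimate of Lemma 4.1 and then transfer it down to the range $1<p<2$. The first thing to observe is \emph{why} the argument of Lemma 4.2 is unavailable here: that proof rewrites $\|\mathcal S_{\alpha,2^j}(f)\|_{L^p_w}^2=\|\mathcal S_{\alpha,2^j}(f)^2\|_{L^{p/2}_w}$ and dualizes the right-hand side against $L^{(p/2)'}_w$, using the boundedness of the weighted maximal operator $M_w$ on $L^{(p/2)'}_w$. When $1<p<2$ we have $p/2<1$, so $L^{p/2}_w$ is no longer a Banach space and this duality breaks down completely. Hence a single-weight argument of the Lemma 4.2 type cannot work, and one is forced to exploit that Lemma 4.1 holds for the \emph{entire} class $A_2$.

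The key point is that the proof of Lemma 4.1 used only the doubling property of Lemma 2.1; consequently, for \emph{every} weight $v\in A_2$ one has
\[
\big\|\mathcal S_{\alpha,2^j}(f)\big\|_{L^2_v}\le C\cdot 2^{jn}\big\|\mathcal S_\alpha(f)\big\|_{L^2_v},
\]
where $C$ depends only on the $A_2$ constant of $v$ and is independent of $f$ and of $j$, all the $j$-dependence being carried by the explicit factor $2^{jn}$. I would then apply the off-diagonal Rubio de Francia extrapolation theorem to the pair of nonnegative functions $\big(\mathcal S_{\alpha,2^j}(f),\,2^{jn}\mathcal S_\alpha(f)\big)$: from the validity of the displayed inequality for all $v\in A_2$ it follows that for every $1<p<\infty$ and every $w\in A_p$,
\[
\big\|\mathcal S_{\alpha,2^j}(f)\big\|_{L^p_w}\le C\cdot 2^{jn}\big\|\mathcal S_\alpha(f)\big\|_{L^p_w},
\]
with $C$ depending only on $p$, $n$ and $[w]_{A_p}$. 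Restricting to $1<p<2$ gives precisely the assertion of Lemma 4.3, with the same factor $2^{jn}$ as in the $L^2$ case; this is forced, since extrapolation preserves the fixed multiplicative constant sitting in the second entry of the pair.

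The main obstacle is exactly the failure of the elementary duality for $p<2$: any attempt to keep working with the single weight $w$ forces the negative power $\mathcal S_\alpha(f)^{p-2}$, through the splitting $\mathcal S^p=\mathcal S^2\cdot\mathcal S^{p-2}$ needed to run the Fubini identity behind Lemma 4.1, and this power blows up on the set where $\mathcal S_\alpha(f)$ is small and cannot be controlled. Passing to the whole Muckenhoupt family — equivalently, running the Rubio de Francia algorithm for $M_w$ on the dual exponent $p'>2$ — is what circumvents this. If one prefers to avoid extrapolation altogether, the same bound can be reached by a good-$\lambda$ (distribution function) inequality comparing the super-level sets of $\mathcal S_{\alpha,2^j}(f)$ and $\mathcal S_\alpha(f)$, in which the factor $2^{jn}$ arises from covering the wide cone $\Gamma_{2^j}(x)$ by $O(2^{jn})$ translates of the unit-aperture cone; there the delicate point is the stopping-time bookkeeping required to handle the varying height $t$ without worsening the constant.
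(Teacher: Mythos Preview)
Your extrapolation argument is correct: Lemma~4.1 is indeed uniform over $A_2$ with constant depending only on $[v]_{A_2}$, and packaging the pair as $\big(\mathcal S_{\alpha,2^j}(f),\,2^{jn}\mathcal S_\alpha(f)\big)$ lets Rubio de Francia extrapolation carry the explicit factor $2^{jn}$ through to every $1<p<\infty$ and $w\in A_p$. (A minor quibble: what you need is the ordinary diagonal extrapolation theorem, not the off-diagonal version.) The paper, however, takes precisely the alternative route you sketch in your final paragraph: a direct distribution-function argument in the style of Torchinsky. It sets $\Omega_\lambda=\{\mathcal S_\alpha(f)>\lambda\}$, enlarges it to $\Omega^*_\lambda=\{M_w(\chi_{\Omega_\lambda})>c\,2^{-jnp}\}$ with $c=(2[w]_{A_p})^{-1}$, and splits $w(\{\mathcal S_{\alpha,2^j}(f)>\lambda\})$ according to $\Omega^*_\lambda$; the weak-$(1,1)$ bound for $M_w$ handles the piece inside $\Omega^*_\lambda$, while the complement is controlled via Chebyshev and the $L^2$ inequality
\[
\int_{(\Omega^*_\lambda)^c}\mathcal S_{\alpha,2^j}(f)^2\,w\le C\,2^{jnp}\int_{(\Omega_\lambda)^c}\mathcal S_{\alpha}(f)^2\,w,
\]
which is proved by the same Fubini manoeuvre as Lemma~4.1 once one checks that $\Gamma_{2^j}\big((\Omega^*_\lambda)^c\big)\subseteq\Gamma\big((\Omega_\lambda)^c\big)$. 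Your approach is shorter and, as a bonus, gives the constant $2^{jn}$ uniformly for all $p>1$ (so Lemma~4.2 would become redundant, with an improved constant for $p>2$); the paper's approach is entirely self-contained and avoids importing the extrapolation machinery as a black box.
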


\begin{proof}
We will adopt the same method as in [18, page 315--316]. For any $j\in\mathbb Z_+$, set $\Omega_\lambda=\big\{x\in\mathbb R^n:\mathcal S_\alpha(f)(x)>\lambda\big\}$ and $\Omega_{\lambda,j}=\big\{x\in\mathbb R^n:\mathcal S_{\alpha,2^j}(f)(x)>\lambda\big\}.$ We also set
\begin{equation*}
\Omega^*_\lambda=\Big\{x\in\mathbb R^n:M_w(\chi_{\Omega_\lambda})(x)>\frac{1}{2^{(jnp+1)}\cdot[w]_{A_p}}\Big\}.
\end{equation*}
Observe that $w\big(\Omega_{\lambda,j}\big)\le w\big(\Omega^*_\lambda\big)+w\big(\Omega_{\lambda,j}\cap(\mathbb R^n\backslash\Omega^*_\lambda)\big)$. Thus
\begin{equation*}
\begin{split}
\big\|\mathcal S_{\alpha,2^j}(f)\big\|^p_{L^p_w}&=\int_0^\infty p\lambda^{p-1}w\big(\Omega_{\lambda,j}\big)\,d\lambda\\
&\le\int_0^\infty p\lambda^{p-1}w\big(\Omega^*_\lambda\big)\,d\lambda+\int_0^\infty p\lambda^{p-1}w\big(\Omega_{\lambda,j}\cap(\mathbb R^n\backslash\Omega^*_\lambda)\big)\,d\lambda\\
&=\mbox{\upshape I+II}.
\end{split}
\end{equation*}
The weighted weak type estimate of $M_w$ yields
\begin{equation}
\mbox{\upshape I}\le C\cdot2^{jnp}\int_0^\infty p\lambda^{p-1}w(\Omega_\lambda)\,d\lambda=C\cdot2^{jnp}\big\|\mathcal S_\alpha(f)\big\|^p_{L^p_w}.
\end{equation}
To estimate II, we now claim that the following inequality holds.
\begin{equation}
\int_{\mathbb R^n\backslash\Omega^*_\lambda}\mathcal S_{\alpha,2^j}(f)(x)^2w(x)\,dx\le C\cdot2^{jnp}\int_{\mathbb R^n\backslash\Omega_\lambda}\mathcal S_{\alpha}(f)(x)^2w(x)\,dx.
\end{equation}
We will take the above inequality temporarily for granted, then it follows from Chebyshev's inequality and (16) that
\begin{equation*}
\begin{split}
w\big(\Omega_{\lambda,j}\cap(\mathbb R^n\backslash\Omega^*_\lambda)\big)&\le\lambda^{-2}\int_{\Omega_{\lambda,j}\cap(\mathbb R^n\backslash\Omega^*_\lambda)}\mathcal S_{\alpha,2^j}(f)(x)^2w(x)\,dx\\
&\le\lambda^{-2}\int_{\mathbb R^n\backslash\Omega^*_\lambda}\mathcal S_{\alpha,2^j}(f)(x)^2w(x)\,dx\\
&\le C\cdot2^{jnp}\lambda^{-2}\int_{\mathbb R^n\backslash\Omega_\lambda}\mathcal S_{\alpha}(f)(x)^2w(x)\,dx.
\end{split}
\end{equation*}
Hence
\begin{equation*}
\mbox{\upshape II}\le C\cdot2^{jnp}\int_0^\infty p\lambda^{p-1}\bigg(\lambda^{-2}\int_{\mathbb R^n\backslash\Omega_\lambda}\mathcal S_{\alpha}(f)(x)^2w(x)\,dx\bigg)d\lambda.
\end{equation*}
Changing the order of integration yields
\begin{align}
\mbox{\upshape II}&\le C\cdot2^{jnp}\int_{\mathbb R^n}\mathcal S_\alpha(f)(x)^2\bigg(\int_{|\mathcal S_\alpha(f)(x)|}^\infty p\lambda^{p-3}\,d\lambda\bigg)w(x)\,dx\notag\\
&\le C\cdot2^{jnp}\frac{p}{2-p}\cdot\big\|\mathcal S_\alpha(f)\big\|^p_{L^p_w}.
\end{align}
Combining the above estimate (17) with (15) and taking $p$-th root on both sides, we complete the proof of Lemma 4.3. So it remains to prove the inequality (16). Set $\Gamma_{2^j}(\mathbb R^n\backslash\Omega^*_\lambda)=\underset{x\in\mathbb R^n\backslash\Omega^*_\lambda}{\bigcup}\Gamma_{2^j}(x)$ and
$\Gamma(\mathbb R^n\backslash\Omega_\lambda)=\underset{x\in\mathbb R^n\backslash\Omega_\lambda}{\bigcup}\Gamma(x).$
For each given $(y,t)\in\Gamma_{2^j}(\mathbb R^n\backslash\Omega^*_\lambda)$, by Lemma 2.1, we thus have
\begin{equation*}
w\big(B(y,2^jt)\cap(\mathbb R^n\backslash\Omega^*_\lambda)\big)\le C\cdot2^{jnp}w\big(B(y,t)\big).
\end{equation*}
It is not difficult to check that $w\big(B(y,t)\cap\Omega_\lambda\big)\le\frac{w(B(y,t))}{2}$ and $\Gamma_{2^j}(\mathbb R^n\backslash\Omega^*_\lambda)\subseteq\Gamma(\mathbb R^n\backslash\Omega_\lambda)$. In fact, for any $(y,t)\in\Gamma_{2^j}(\mathbb R^n\backslash\Omega^*_\lambda)$, there exists a point $x\in \mathbb R^n\backslash\Omega^*_\lambda$ such that $(y,t)\in\Gamma_{2^j}(x)$. Then we can deduce
\begin{equation*}
\begin{split}
w\big(B(y,t)\cap\Omega_\lambda\big)&\le w\big(B(y,2^jt)\cap\Omega_\lambda\big)\\
&= \int_{B(y,2^jt)}\chi_{\Omega_\lambda}(z)w(z)\,dz\\
&\le [w]_{A_p}\cdot2^{jnp}w\big(B(y,t)\big)\cdot\frac{1}{w(B(y,2^jt))}\int_{B(y,2^jt)}\chi_{\Omega_\lambda}(z)w(z)\,dz.
\end{split}
\end{equation*}
Note that $x\in B(y,2^jt)\cap(\mathbb R^n\backslash\Omega^*_\lambda)$. So we have
\begin{equation*}
\begin{split}
w\big(B(y,t)\cap\Omega_\lambda\big)\le [w]_{A_p}\cdot2^{jnp}w\big(B(y,t)\big)M_w(\chi_{\Omega_\lambda})(x)\le \frac{w(B(y,t))}{2}.
\end{split}
\end{equation*}
Hence
\begin{equation*}
\begin{split}
w\big(B(y,t)\big)&=w\big(B(y,t)\cap\Omega_\lambda\big)+w\big(B(y,t)\cap(\mathbb R^n\backslash\Omega_\lambda)\big)\\
&\le \frac{w(B(y,t))}{2}+w\big(B(y,t)\cap(\mathbb R^n\backslash\Omega_\lambda)\big),
\end{split}
\end{equation*}
which is equivalent to
\begin{equation*}
w\big(B(y,t)\big)\le 2\cdot w\big(B(y,t)\cap(\mathbb R^n\backslash\Omega_\lambda)\big).
\end{equation*}
The above inequality implies in particular that there is a point $z\in B(y,t)\cap(\mathbb R^n\backslash\Omega_\lambda)\neq\emptyset$. In this case, we have $(y,t)\in\Gamma(z)$ with $z\in \mathbb R^n\backslash\Omega_\lambda$, which yields $\Gamma_{2^j}(\mathbb R^n\backslash\Omega^*_\lambda)\subseteq\Gamma(\mathbb R^n\backslash\Omega_\lambda)$. Thus we obtain
\begin{equation*}
w\big(B(y,2^jt)\cap(\mathbb R^n\backslash\Omega_\lambda^*)\big)\le C\cdot2^{jnp}w\big(B(y,t)\cap(\mathbb R^n\backslash\Omega_\lambda)\big).
\end{equation*}
Therefore
\begin{equation*}
\begin{split}
&\int_{\mathbb R^n\backslash\Omega^*_\lambda}\mathcal S_{\alpha,2^j}(f)(x)^2w(x)\,dx\\
=&\int_{\mathbb R^n\backslash\Omega^*_\lambda}\bigg(\iint_{\Gamma_{2^j}(x)}\Big(A_\alpha(f)(y,t)\Big)^2\frac{dydt}{t^{n+1}}
\bigg)w(x)\,dx\\
\le&\iint_{\Gamma_{2^j}(\mathbb R^n\backslash\Omega^*_\lambda)}\Big(\int_{B(y,2^jt)\cap(\mathbb R^n\backslash\Omega_\lambda^*)}w(x)\,dx\Big)\Big(A_\alpha(f)(y,t)\Big)^2\frac{dydt}{t^{n+1}}\\
\le&\,C\cdot2^{jnp}\iint_{\Gamma(\mathbb R^n\backslash\Omega_\lambda)}\Big(\int_{B(y,t)\cap(\mathbb R^n\backslash\Omega_{\lambda})}w(x)\,dx\Big)\Big(A_\alpha(f)(y,t)\Big)^2\frac{dydt}{t^{n+1}}\\
\le&\,C\cdot2^{jnp}\int_{\mathbb R^n\backslash\Omega_\lambda}\mathcal S_{\alpha}(f)(x)^2w(x)\,dx,
\end{split}
\end{equation*}
which is just our desired conclusion.
\end{proof}

We are now in a position to give the proof of Theorem 1.3.

\begin{proof}[Proof of Theorem 1.3]
From the definition of $g^*_{\lambda,\alpha}$, we readily see that
\begin{equation*}
\begin{split}
g^*_{\lambda,\alpha}(f)(x)^2=&\iint_{\mathbb R^{n+1}_+}\left(\frac{t}{t+|x-y|}\right)^{\lambda n}\Big(A_\alpha(f)(y,t)\Big)^2\frac{dydt}{t^{n+1}}\\
=&\int_0^\infty\int_{|x-y|<t}\left(\frac{t}{t+|x-y|}\right)^{\lambda n}\Big(A_\alpha(f)(y,t)\Big)^2\frac{dydt}{t^{n+1}}\\
&+\sum_{j=1}^\infty\int_0^\infty\int_{2^{j-1}t\le|x-y|<2^jt}\left(\frac{t}{t+|x-y|}\right)^{\lambda n}\Big(A_\alpha(f)(y,t)\Big)^2\frac{dydt}{t^{n+1}}\\
\le&\, C\bigg[\mathcal S_\alpha(f)(x)^2+\sum_{j=1}^\infty 2^{-j\lambda n}\mathcal S_{\alpha,2^j}(f)(x)^2\bigg].
\end{split}
\end{equation*}
For any given ball $B=B(x_0,r_B)\subseteq\mathbb R^n$, then from the above inequality, it follows that
\begin{equation*}
\begin{split}
&\frac{1}{w(B)^{\kappa/p}}\bigg(\int_B\big|g^*_{\lambda,\alpha}(f)(x)\big|^pw(x)\,dx\bigg)^{1/p}\\
\le\,&\frac{1}{w(B)^{\kappa/p}}\bigg(\int_B\big|\mathcal S_\alpha(f)(x)\big|^pw(x)\,dx\bigg)^{1/p}
+\sum_{j=1}^\infty 2^{-j\lambda n/2}\cdot
\frac{1}{w(B)^{\kappa/p}}\bigg(\int_B\big|\mathcal S_{\alpha,2^j}(f)(x)\big|^pw(x)\,dx\bigg)^{1/p}\\
=\,&I_0+\sum_{j=1}^\infty 2^{-j\lambda n/2}I_j.
\end{split}
\end{equation*}
By Theorem 1.1, we know that $I_0\le C\|f\|_{L^{p,\kappa}(w)}$. Below we shall give the estimates of $I_j$ for $j=1,2,\ldots.$ As before, we set $f=f_1+f_2$, $f_1=f\chi_{_{2B}}$ and write
\begin{equation*}
\begin{split}
&\frac{1}{w(B)^{\kappa/p}}\bigg(\int_B\big|\mathcal S_{\alpha,2^j}(f)(x)\big|^pw(x)\,dx\bigg)^{1/p}\\
\le\,&\frac{1}{w(B)^{\kappa/p}}\bigg(\int_B\big|\mathcal S_{\alpha,2^j}(f_1)(x)\big|^pw(x)\,dx\bigg)^{1/p}+
\frac{1}{w(B)^{\kappa/p}}\bigg(\int_B\big|\mathcal S_{\alpha,2^j}(f_2)(x)\big|^pw(x)\,dx\bigg)^{1/p}\\
=\,&I^{(1)}_j+I^{(2)}_j.
\end{split}
\end{equation*}
Applying Lemmas 4.1--4.3, Theorem A and Lemma 2.1, we obtain
\begin{equation*}
\begin{split}
I^{(1)}_j&\le \frac{1}{w(B)^{\kappa/p}}\big\|\mathcal S_{\alpha,2^j}(f_1)\big\|_{L^p_w}\\
&\le C\Big(2^{jn}+2^{jnp/2}\Big)\frac{1}{w(B)^{\kappa/p}}\cdot\|f_1\|_{L^p_w}\\
&\le C\|f\|_{L^{p,\kappa}(w)}\Big(2^{jn}+2^{jnp/2}\Big)\cdot\frac{w(2B)^{\kappa/p}}{w(B)^{\kappa/p}}\\
&\le C\|f\|_{L^{p,\kappa}(w)}\Big(2^{jn}+2^{jnp/2}\Big).
\end{split}
\end{equation*}
We now turn to estimate the term $I^{(2)}_j$. For any $x\in B$, $(y,t)\in\Gamma_{2^j}(x)$ and $z\in\big(2^{k+1}B\backslash 2^{k}B\big)\cap B(y,t)$, then by a direct calculation, we can easily deduce
\begin{equation*}
t+2^j t\ge |x-y|+|y-z|\ge|x-z|\ge|z-x_0|-|x-x_0|\ge 2^{k-1}r_B.
\end{equation*}
Thus, it follows from the previous estimates (1) and (2) that
\begin{equation*}
\begin{split}
\big|\mathcal S_{\alpha,2^j}(f_2)(x)\big|&=\left(\iint_{\Gamma_{2^j}(x)}\sup_{\varphi\in{\mathcal C}_\alpha}|f_2*\varphi_t(y)|^2\frac{dydt}{t^{n+1}}\right)^{1/2}\\
&\le C\left(\int_{2^{(k-2-j)}r_B}^\infty\int_{|x-y|<2^jt}\bigg|t^{-n}\sum_{k=1}^\infty\int_{2^{k+1}B\backslash 2^{k}B}|f(z)|\,dz\bigg|^2\frac{dydt}{t^{n+1}}\right)^{1/2}\\
&\le C\bigg(\sum_{k=1}^\infty\int_{2^{k+1}B\backslash 2^{k}B}|f(z)|\,dz\bigg)\bigg(\int_{2^{(k-2-j)}r_B}^\infty 2^{jn}\frac{dt}{t^{2n+1}}\bigg)^{1/2}\\
&\le C\cdot2^{{3jn}/2}\sum_{k=1}^\infty\frac{1}{|2^{k+1}B|}\int_{2^{k+1}B\backslash 2^{k}B}|f(z)|\,dz\\
&\le C\|f\|_{L^{p,\kappa}(w)}\cdot2^{{3jn}/2}\sum_{k=1}^\infty w\big(2^{k+1}B\big)^{(\kappa-1)/p}.
\end{split}
\end{equation*}
Furthermore, by using (3) again, we get
\begin{equation*}
\begin{split}
I^{(2)}_j&\le C\|f\|_{L^{p,\kappa}(w)}\cdot2^{{3jn}/2}\sum_{k=1}^\infty\frac{w(B)^{(1-\kappa)/p}}{w(2^{k+1}B)^{(1-\kappa)/p}}\\
&\le C\|f\|_{L^{p,\kappa}(w)}\cdot2^{{3jn}/2}.
\end{split}
\end{equation*}
Therefore
\begin{equation*}
\begin{split}
&\frac{1}{w(B)^{\kappa/p}}\bigg(\int_B\big|g^*_{\lambda,\alpha}(f)(x)\big|^pw(x)\,dx\bigg)^{1/p}\\
\le&\, C\|f\|_{L^{p,\kappa}(w)}\left(1+\sum_{j=1}^\infty 2^{-j\lambda n/2}2^{{3jn}/2}+\sum_{j=1}^\infty 2^{-j\lambda n/2}2^{jnp/2}\right)\\
\le&\, C\|f\|_{L^{p,\kappa}(w)},
\end{split}
\end{equation*}
where the last two series are both convergent under our assumption $\lambda>\max\{p,3\}$. Hence, by taking the supremum over all balls $B\subseteq\mathbb R^n$, we conclude the proof of Theorem 1.3.
\end{proof}

Finally, we remark that by using the arguments as in the proof of Theorems 1.2 and 1.3, we can also show the conclusion of Theorem 1.4. The details are omitted here.

\end{document}